\numberwithin{equation}{section}
\theoremstyle{plain}
\newtheorem{thm}{Theorem}[section]
\newtheorem{prop}{Proposition}[section]
\newtheorem{lem}{Lemma}[section]
\theoremstyle{remark}
\newtheorem{rem}{Remark}[section]
\newcommand{\dE}{\mathbb{E}}
\newcommand{\dR}{\mathbb{R}}
\newcommand{\dZ}{\mathbb{Z}}
\newcommand{\ind}{\mathbbm{1}}
\newcommand{\cD}{\mathcal{D}}
\newcommand{\cF}{\mathcal{F}}
\newcommand{\cH}{\mathcal{H}}
\newcommand{\cN}{\mathcal{N}}
\newcommand{\de}{\mathrm{e}}
\newcommand{\veps}{\varepsilon}
\newcommand{\hsp}{\hspace{0.5cm}}
\newcommand{\wh}{\widehat}
\newcommand{\wht}{\wh{\theta}_{n}}
\renewcommand{\vec}{\textnormal{vec}}
\renewcommand{\geq}{\geqslant}
\renewcommand{\leq}{\leqslant}
\newcommand{\cvgas}{~ \overset{\textnormal{a.s.}}{\longrightarrow} ~}
\newcommand{\cvgl}{~ \overset{\cD}{\longrightarrow} ~}
\newcommand{\udots}{\mathinner{\mskip1mu\raise-2pt\vbox{\kern7pt\hbox{.}}\mskip2mu\raise1pt\hbox{.}\mskip2mu\raise4pt\hbox{.}\mskip1mu}}
\email{frederic.proia@univ-angers.fr}
\email{marius.soltane.etu@univ-lemans.fr}
\keywords{RCAR process, Time series, Random coefficients, OLS estimation.}
\begin{document}

\title[RCAR process with correlated coefficients]
{Comments on the presence of serial correlation in the random coefficients of an autoregressive process
\vspace{2ex}}
\author[F. Pro\"ia]{Fr\'ed\'eric Pro\"ia}
\address{Laboratoire Angevin de REcherche en MAth\'ematiques (LAREMA), CNRS, Universit\'e d'Angers, Universit\'e Bretagne Loire. 2 Boulevard Lavoisier, 49045 Angers cedex 01, France.}
\author[M. Soltane]{Marius Soltane}
\address{Laboratoire Manceau de Math\'ematiques, Le Mans Universit\'e, Avenue O. Messiaen, 72085 Le Mans cedex 9, France.}

\thanks{}

\begin{abstract}
We consider an RCAR$(p)$ process and we establish that the standard estimation lacks consistency as soon as there exists a nonzero serial correlation in the coefficients. We give the correct asymptotic behavior and some simulations come to illustrate the results.
\end{abstract}

\maketitle

\section{Introduction and Motivations}
\label{SecIntro}

This note is devoted to the estimation issue in the random coefficients non-linear generalization of the autoregressive process or order $p$, that is usually defined as
\begin{equation*}
X_{t} = \sum_{k=1}^{p} (\theta_k + b_{k,\, t})\, X_{t-k} + \veps_{t}
\end{equation*}
where, for each $1 \leq k \leq p$, $(b_{k,\, t})_{t\, \in\, \dZ}$ is a white noise included in the coefficients, independent of $(\veps_{t})_{t\, \in\, \dZ}$. We refer the reader to the monograph of \cite{NichollsQuinn82} for a rich introduction to the topic. The conditions of existence of a stationary solution to this equation have been widely studied since the seminal works of \cite{Andel76} or \cite{NichollsQuinn81a}, see also the more recent paper of \cite{AueHorvathSteinebach06}. The estimation of $\theta$ has been extensively developed to this day as well. \cite{NichollsQuinn81b} suggest to make use of the OLS which turns out to be the same, whether there is randomness or not in the coefficients. Under the stationarity conditions, the OLS is known to be strongly consistent and asymptotically normal in both cases (but with different variances). Looking for a unified theory, that is, irrespective of stationarity issues, \cite{AueHorvath11} and \cite{BerkesHorvathLing09} show that the QMLE is consistent and asymptotically normal as soon as there is some randomness in the coefficients, the variability of which allows to circumvent the well-known unit root issues. Later, \cite{HillPeng14} develop an empirical likelihood estimation which is asymptotically normal even when the coefficients are non-random. Let us also mention the WLS approach of \cite{Schick96} and the M-estimation of \cite{KoulSchick96} adapted to stationary processes, which provide asymptotically normal estimators with optimal variance. Yet, the OLS estimation, easy to compute, that does not require numerical optimization or additional parametrization, is still very popular. We shall note that all these works, except those of Nicholls and Quinn, are related to the first-order process. By contrast, studies on general multivariate RCAR processes do not seem widespread in the literature, to the best of our knowledge. In this paper, we are interested in the implications of serial correlation in the random coefficients of a $p$-order RCAR process. Among the fundamental hypotheses of the usual theory, the coefficients are considered as white noises. However, our main statement is that, in a chronological model, it is unlikely that the coefficients are serially uncorrelated, from the moment they are considered as random. From that point of view, we intend to show that the OLS method may lead to inappropriate conclusions when there is some serial correlation in the random coefficients. This work can be seen as a partial generalization of \cite{ProiaSoltane18} where in particular, for the first-order RCAR process, the lack of consistency is established together with the correct behavior of the statistic. For this purpose, consider the RCAR process $(X_{t})_{t\, \in\, \dZ}$ of order $p$ generated by the autoregression
\begin{equation}
\label{RCAR}
X_{t} = \sum_{k=1}^{p} (\theta_k + \alpha_k\, \eta_{k,\, t-1} + \eta_{k,\, t})\, X_{t-k} + \veps_{t}
\end{equation}
where $(\veps_{t})_{t\, \in\, \dZ}$ and $(\eta_{k,\, t})_{t\, \in\, \dZ}$ are uncorrelated strong white noises (that is, i.i.d. sequences) with $q$th-order moments $\sigma_{q}$ and $\tau_{k,\, q}$ respectively. We also assume that $(\eta_{k,\, t})_{t\, \in\, \dZ}$ and $(\eta_{\ell,\, t})_{t\, \in\, \dZ}$ are uncorrelated, for $k \neq \ell$. The main fact to give credit to this model is Prop. 3.2.1 of \cite{BrockwellDavis91} which states that any stationary process with finite memory can be expressed by a moving average structure. Thus, by means of an MA(1) model, we introduce a lag-one memory in the random coefficients as a simple case study meant to illustrate the effect on the estimation. Note that, although much more cumbersome, the same study may lead to the same conclusions for any stationary coefficients with finite memory, \textit{via} general MA structures. However, for infinite memory (\textit{e.g.} stationary random AR(1) coefficients), the problem is probably much more complicated. In Section \ref{SecRes}, we introduce our hypotheses and we detail the asymptotic behavior of the standard OLS in presence of serial correlation, under the stationarity conditions. In particular, we brought to light a disturbing consequence when testing the significance of $\theta$. Some simulations come to illustrate our results in Section \ref{SecAppli} while Section \ref{SecPro} contains the proofs. Finally, we postpone to the Appendix the purely computational steps, for the sake or readability.

\section{Influence of serial correlation in the coefficients}
\label{SecRes}

It will be convenient to write \eqref{RCAR} in the vector form
\begin{equation}
\label{RCARVec}
\Phi_{t} = (C_{\theta} + N_{t-1}\, D_{\alpha} + N_{t})\, \Phi_{t-1} + E_{t}
\end{equation}
where $\Phi_{t}^{\, T} = (X_{t}, \hdots, X_{t-p+1})$, $E_{t}^{\, T} = (\veps_{t}, 0, \hdots, 0)$, $N_{t}$ is a matrix with $\eta_t = (\eta_{1,\, t}, \hdots, \eta_{p,\, t})$ in the first row and 0 elsewhere, $D_{\alpha} = \textnormal{diag}(\alpha_1, \hdots, \alpha_p)$ and $C_{\theta}$ is the companion matrix of the underlying AR$(p)$ process. We already make the assumption that any odd moment of $\veps_{0}$ and $\eta_{k,\, 0}$ is zero as soon as it exists ($\sigma_{2q+1} = \tau_{k,\, 2q+1} = 0$), and that the distribution of the noises guarantees
\begin{equation}
\label{HypLog}
\dE[\ln \Vert C_{\theta} + N_0\, D_{\alpha} + N_1 \Vert] < 0 \hsp \text{and} \hsp \dE[\ln^{+} \vert \veps_0 \vert] < +\infty.
\end{equation}
Those moments conditions are assumed to hold throughout the study. As for the moments of the process, the hypotheses are related to the space $\Theta$ where the parameters live. Define
\begin{equation*}
\Theta_m = \{ \{ \theta, \alpha, \sigma_q, \tau_{k,\, q^{\prime}} \} ~  \vert ~ \dE[\eta_{k,\, t}^{\, a}\, X_{t}^{\, m}] < +\infty \text{ for } a \in \{ 0, \hdots, m \} \text{ and } k \in \{ 1, \hdots, p \} \}
\end{equation*}
for $m=2, 4$, where $\sigma_q$ and $\tau_{k,\, q^{\prime}}$ name in a generic way the highest-order moments of $(\veps_{t})$ and $(\eta_{k,\, t})$ necessary to obtain $\dE[\eta_{k,\, t}^{\, a}\, X_{t}^{\, m}] < +\infty$. We give in the Appendix some precise facts about $\Theta_2$ and $\Theta_4$, in particular we can see that $q = m$ and $q^{\, \prime} = 2m$ are needed. The pathological cases are put together in a set called $\Theta^{*}$ that is built step by step during the reasonings. First, we have the following causal representation showing in particular that the process is adapted to the filtration
\begin{equation}
\label{Filt}
\cF_{t} = \sigma( (\veps_{s}, \eta_{1,\, s}, \hdots, \eta_{p,\, s}),\, s \leq t).
\end{equation}
The reader will find the proofs of our results in Section \ref{SecPro}.

\begin{prop}
\label{PropErgo}
For all $t \in \dZ$,
\begin{equation}
\label{ExprCausal}
\Phi_{t} = E_{t} + \sum_{k=1}^{\infty} E_{t-k}\, \prod_{\ell=0}^{k-1} (C_{\theta} + N_{t-\ell-1}\, D_{\alpha} + N_{t-\ell}) \hsp \textnormal{a.s.}
\end{equation}
In consequence, $(X_{t})$ is strictly stationary and ergodic.
\end{prop}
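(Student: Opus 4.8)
The plan is to treat the vector recurrence \eqref{RCARVec} as a stochastic recurrence equation driven by the stationary sequence $\{(A_t, E_t)\}$, where I abbreviate $A_t = C_\theta + N_{t-1}\, D_\alpha + N_t$, and to recover \eqref{ExprCausal} as its unique stationary solution along the classical Bougerol--Picard scheme. First I would iterate the recursion $k$ times to obtain
\[
\Phi_t = E_t + \sum_{j=1}^{k}\Big(\prod_{\ell=0}^{j-1}A_{t-\ell}\Big)E_{t-j} + \Big(\prod_{\ell=0}^{k}A_{t-\ell}\Big)\Phi_{t-k-1},
\]
so that the candidate causal process is the almost sure limit of the finite sums and the whole matter reduces to (i) the a.s.\ convergence of the series and (ii) the vanishing of the remainder term.

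For (i) the engine is the top Lyapunov exponent of the products $\prod_{\ell=0}^{j-1}A_{t-\ell}$. The key observation is that, although the $A_t$ are \emph{not} independent --- each matrix carries both $N_{t-1}$ and $N_t$, so $A_t$ and $A_{t+1}$ share $N_t$ and the sequence is only $1$-dependent --- it is a fixed measurable function of finitely many coordinates of the i.i.d.\ array $\{(\veps_s,\eta_s)\}$, hence stationary and ergodic. Kingman's subadditive ergodic theorem therefore applies to $j\mapsto \ln\Vert\prod_{\ell=0}^{j-1}A_{t-\ell}\Vert$, and submultiplicativity of the norm yields $\gamma := \inf_j \tfrac{1}{j}\dE\big[\ln\Vert\prod_{\ell=0}^{j-1}A_{t-\ell}\Vert\big] \le \dE[\ln\Vert A_0\Vert] < 0$ by the first half of \eqref{HypLog}. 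Thus $\Vert\prod_{\ell=0}^{j-1}A_{t-\ell}\Vert \le \de^{(\gamma+\delta)j}$ eventually, a.s., for any $\delta>0$. On the driving side $\Vert E_{t-j}\Vert = |\veps_{t-j}|$, and the second half of \eqref{HypLog} together with Borel--Cantelli forces $\tfrac{1}{j}\ln^{+}|\veps_{t-j}|\to 0$ a.s., so $\Vert E_{t-j}\Vert \le \de^{\delta j}$ eventually. Choosing $\delta$ with $\gamma + 2\delta < 0$ makes the general term geometrically small, whence absolute a.s.\ convergence of the series; call its limit $\Psi_t$.

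Next I would verify that $\Psi_t$ indeed solves \eqref{RCARVec} --- a direct index shift gives $\Psi_t = A_t\Psi_{t-1} + E_t$ --- and that it is the unique stationary solution. Uniqueness is where the negative exponent is used a second time: any two stationary solutions differ by $\Delta_t$ with $\Delta_t = (\prod_{\ell=0}^{k}A_{t-\ell})\Delta_{t-k-1}$, and since $\Vert\prod_{\ell=0}^{k}A_{t-\ell}\Vert\to 0$ a.s.\ while $(\Delta_{t-k-1})_k$ is stationary, hence tight, one gets $\Delta_t = 0$ a.s. This simultaneously shows that the remainder in the iteration vanishes and identifies $\Psi_t$ with the process $\Phi_t$ of \eqref{RCARVec}, proving \eqref{ExprCausal}; adaptedness to \eqref{Filt} is then immediate from the representation.

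Finally, stationarity and ergodicity of $(X_t)$ read off from the representation rather than from a separate argument. Formula \eqref{ExprCausal} exhibits $\Phi_t = g\big((\veps_s,\eta_{1,\,s},\dots,\eta_{p,\,s})_{s\le t}\big)$ for a single measurable map $g$ commuting with the time shift, so $(\Phi_t)$ is a measurable factor of the i.i.d.\ --- hence stationary and ergodic (Bernoulli) --- input sequence; taking the first coordinate gives the claim for $(X_t)$. The one genuine obstacle I anticipate is the non-independence of the $A_t$: the verbatim i.i.d.\ Bougerol--Picard statement does not apply, so one must route the Lyapunov bound through the stationary-ergodic (Kingman) version and check that the $1$-dependence does not spoil the geometric control; the remaining steps are routine.
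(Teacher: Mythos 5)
Your proof is correct and follows essentially the same route as the paper: a negative Lyapunov-type bound obtained from the ergodic theorem applied to the stationary (merely $1$-dependent) matrices $A_t$, subexponential growth of $\vert\veps_{t-k}\vert$ from $\dE[\ln^{+}\vert\veps_0\vert]<+\infty$, a.s.\ convergence of the resulting series, and transfer of strict stationarity and ergodicity through the measurable factor map. The differences are cosmetic --- the paper bounds the partial products via Birkhoff plus submultiplicativity and cites Berkes et al.\ for the noise term where you invoke Kingman and Borel--Cantelli --- and your explicit uniqueness/vanishing-remainder step is a welcome addition that the paper leaves implicit.
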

Suppose now that $(X_{-p+1}, \hdots, X_{n})$ is an available trajectory from which we deduce the standard OLS estimator of $\theta$, the mean value of the coefficients,
\begin{equation}
\label{OLS}
\wht = S_{n-1}^{\, -1} \sum_{t=1}^{n} \Phi_{t-1}\, X_{t} \hsp \text{where} \hsp S_{n} = \sum_{t=0}^{n} \Phi_{t}\, \Phi_{t}^{\, T}.
\end{equation}
To simplify, the initial vector $\Phi_0$ is assumed to follow the stationary distribution of the process. It is important to note that \eqref{OLS} is the OLS of $\theta$ with respect to $\sum_{t} (X_{t} - \theta^{\, T} \Phi_{t-1})^2$ even when $\alpha \neq 0$, as it is done in Sec. 3 of \cite{NichollsQuinn81b}, because our interest is precisely to show that the usual estimation (\textit{i.e.} assuming $\alpha=0$) may lead to inappropriate conclusions in case of misspecification of the coefficients.

\begin{rem}
\label{RemErgo}
From the causal representation, ergodicity implies that
\begin{equation}
\label{Ergo}
\frac{1}{n} \sum_{t=1}^{n} Y_{t} \cvgas \dE[Y_0] < +\infty \hsp \text{for any process of the form} \hsp Y_{t} = \prod_{i=1}^{m} X_{t-d_{i}}\, \eta_{k_i,\, t-d_i^{\, \prime}}^{\, a_i}
\end{equation}
where $0 \leq d_i^{\, \prime} \leq d_i$, $a_i \in \{0, 1\}$ and $1 \leq k_i \leq p$, provided that $\Theta \subset \Theta_m$ ($m = 2,4$). In all the study, any term taking the form of $Y_{t}$ is called a \textit{second-order} (or \textit{fourth-order}) \textit{isolated term} when $m=2$ (or $m=4$). According to \eqref{Ergo}, isolated terms satisfy $Y_{t} = o(n)$ and thus $\vert Y_{t} \vert^{1/2} = o(\sqrt{n})$ a.s. Such arguments are frequently used in the proofs.
\end{rem}
The autocovariances of the stationary process are denoted by $\ell_i = \dE[X_{i}\, X_0]$ and, by ergodicity, the sample covariances are strongly consistent estimators, that is
\begin{equation}
\label{SampleCov}
\frac{1}{n} \sum_{t=1}^{n} X_{t}\, X_{t-i} \cvgas \ell_i.
\end{equation}
Based on the autocovariances, we build
\begin{equation}
\label{Lam}
\Lambda_0 = \begin{pmatrix}
\ell_0 & \ell_1 & \cdots & \ell_{p-1} \\
\ell_1 & \ell_0 & \cdots & \ell_{p-2} \\
\vdots & \vdots & & \vdots \\
\ell_{p-1} & \ell_{p-2} & \cdots & \ell_0
\end{pmatrix} \hsp \text{and} \hsp L_1 = \begin{pmatrix}
\ell_1 \\
\ell_2 \\
\vdots \\
\ell_p
\end{pmatrix}.
\end{equation}
The matrix $\Lambda_0$ is clearly positive semi-definite but the case where it would be non-invertible is part of $\Theta^{*}$. This is the multivariate extension of $2\, \alpha\, \tau_2 \neq 1$ in \cite{ProiaSoltane18}, this can also be compared to assumption (v) in \cite{NichollsQuinn81b}. The asymptotic behavior of \eqref{OLS} is now going to be studied in terms of convergence, normality and rate. The result below is immediate from the ergodic theorem, provided that $\Theta \subset \Theta_2$.

\begin{thm}
\label{ThmCvgEst}
Assume that $\Theta \subset \Theta_2 \backslash \Theta^{*}$. Then, we have the almost sure convergence
\begin{equation*}
\wht \cvgas \theta^{\, *} = \Lambda_0^{-1}\, L_1.
\end{equation*}
\end{thm}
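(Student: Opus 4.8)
The plan is to write the estimator as a ratio of empirical moments normalized by $n$ and apply the ergodic theorem entrywise. From \eqref{OLS}, factoring out $1/n$ gives
\begin{equation*}
\wht = \Big( \frac{1}{n}\, S_{n-1} \Big)^{-1} \frac{1}{n} \sum_{t=1}^{n} \Phi_{t-1}\, X_{t},
\end{equation*}
so that the convergence of $\wht$ will follow from the convergence of the normalized Gram matrix $S_{n-1}/n$ and of the normalized cross-moment vector, together with the continuity of matrix inversion at an invertible limit.

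First I would treat the denominator. Since $\Phi_{t}^{\, T} = (X_{t}, \hdots, X_{t-p+1})$, the generic $(i,j)$ entry of $S_{n-1}/n$ is $\frac{1}{n}\sum_{t=0}^{n-1} X_{t-i+1}\, X_{t-j+1}$, which is the average of a second-order isolated term in the sense of Remark \ref{RemErgo}. Under $\Theta \subset \Theta_2$ the relevant second moments are finite, so by strict stationarity and ergodicity (Proposition \ref{PropErgo}) and \eqref{SampleCov} this average converges a.s. to $\dE[X_{-i+1}\, X_{-j+1}] = \ell_{\vert i-j \vert}$. Collecting entries yields $\frac{1}{n}\, S_{n-1} \cvgas \Lambda_0$.

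Next I would treat the numerator in the same way. Since $\Phi_{t-1}^{\, T} = (X_{t-1}, \hdots, X_{t-p})$, the $i$-th coordinate of $\frac{1}{n}\sum_{t=1}^{n} \Phi_{t-1}\, X_{t}$ is $\frac{1}{n}\sum_{t=1}^{n} X_{t-i}\, X_{t}$, which is again a second-order isolated term and converges a.s. to $\dE[X_{-i}\, X_0] = \ell_i$. Hence $\frac{1}{n}\sum_{t=1}^{n} \Phi_{t-1}\, X_{t} \cvgas L_1$.

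Finally I would combine the two limits. Because we work on $\Theta \backslash \Theta^{*}$, the limiting matrix $\Lambda_0$ is invertible, so $S_{n-1}/n$ is invertible for $n$ large enough a.s. and $(S_{n-1}/n)^{-1} \cvgas \Lambda_0^{-1}$ by continuity of inversion on the open set of invertible matrices. The product of the two a.s. convergent sequences then gives $\wht \cvgas \Lambda_0^{-1}\, L_1 = \theta^{\, *}$. The only point requiring care is not computational but structural: one must invoke $\Theta \subset \Theta_2$ to guarantee the finiteness of the second-moment limits so that the ergodic theorem applies, and exclude $\Theta^{*}$ to guarantee the invertibility of $\Lambda_0$; the rest is routine bookkeeping on the entries.
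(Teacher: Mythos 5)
Your proof is correct and follows exactly the route the paper intends: the authors state that Theorem \ref{ThmCvgEst} is ``immediate from the ergodic theorem,'' i.e.\ entrywise application of \eqref{SampleCov} to $S_{n-1}/n$ and to the cross-moment vector, combined with the invertibility of $\Lambda_0$ guaranteed by excluding $\Theta^{*}$. Nothing is missing; your write-up simply makes explicit the bookkeeping the paper leaves implicit.
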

We give in Remark \ref{RemEstParam} of the Appendix another expression of $\theta^{\, *}$ which directly shows that $\theta^{\, *} = \theta$ as soon as $\alpha=0$, as it is established in Thm. 4.1 of \cite{NichollsQuinn81b}. However, except for $p=1$ (see \cite{ProiaSoltane18}), we generally do not have $\theta^{\, *} = 0$ when $\theta=0$, and we will use this disturbing fact in the short example of Section \ref{SecAppli}.

\begin{thm}
\label{ThmTLCEst}
Assume that $\Theta \subset \Theta_4 \backslash \Theta^{*}$. Then, there exists a limit matrix $L$ such that we have the asymptotic normality
\begin{equation*}
\sqrt{n}\, (\wht - \theta^{\, *}) \cvgl \cN(0,\, \Lambda_0^{\, -1}\, L\, \Lambda_0^{\, -1}).
\end{equation*}
\end{thm}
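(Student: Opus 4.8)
The plan is to reduce the statement to a central limit theorem for a single centered sum and then treat that sum by a martingale decomposition. Writing $S_{n-1} = \sum_{t=1}^{n} \Phi_{t-1}\Phi_{t-1}^{\, T}$ and recalling that $\theta^{\, *} = \Lambda_0^{-1}L_1$ satisfies $\dE[\Phi_{t-1}(X_t - \Phi_{t-1}^{\, T}\theta^{\, *})] = L_1 - \Lambda_0\,\theta^{\, *} = 0$, I would first put
\begin{equation*}
\sqrt{n}\,(\wht - \theta^{\, *}) = \Big(\frac{S_{n-1}}{n}\Big)^{-1}\frac{1}{\sqrt{n}}\sum_{t=1}^{n} Z_t, \hsp Z_t = \Phi_{t-1}\,(X_t - \Phi_{t-1}^{\, T}\theta^{\, *}).
\end{equation*}
By Theorem \ref{ThmCvgEst} and the ergodic theorem of Remark \ref{RemErgo}, $n^{-1}S_{n-1} \cvgas \Lambda_0$, which is invertible since we work off $\Theta^{*}$; hence by Slutsky it suffices to prove $n^{-1/2}\sum_{t} Z_t \cvgl \cN(0, L)$ for a finite matrix $L$, after which $\Lambda_0^{-1}\cN(0,L) = \cN(0, \Lambda_0^{-1}L\,\Lambda_0^{-1})$ gives the claim. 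Here $Z_t$ is a mean-zero, strictly stationary and ergodic sequence, being a measurable functional of the innovations through \eqref{ExprCausal}, with finite variance because $\Theta \subset \Theta_4$ controls every fourth-order isolated term appearing in $Z_t\, Z_t^{\, T}$.

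The core difficulty is that $Z_t$ is \emph{not} an $(\cF_t)$-martingale difference. Using $\dE[X_t \mid \cF_{t-1}] = \sum_k (\theta_k + \alpha_k\,\eta_{k,\, t-1})X_{t-k}$, I would set $M_t = X_t - \dE[X_t\mid\cF_{t-1}] = \sum_k \eta_{k,\, t}X_{t-k} + \veps_t$ and split $Z_t = \Delta_t + P_t$ with
\begin{equation*}
\Delta_t = \Phi_{t-1}\,M_t, \hsp P_t = \dE[Z_t\mid\cF_{t-1}] = \Phi_{t-1}\Big((\theta - \theta^{\, *})^{\, T}\Phi_{t-1} + \sum_{k=1}^{p}\alpha_k\,\eta_{k,\, t-1}\,X_{t-k}\Big).
\end{equation*}
The part $\Delta_t$ is a genuine $(\cF_t)$-martingale difference, whereas $P_t$ is $\cF_{t-1}$-measurable and, precisely because of the serial correlation carried by the terms $\alpha_k\,\eta_{k,\, t-1}$, is mean-zero but not trivial; this is exactly the mechanism making $\theta^{\, *}\neq\theta$, so $P_t$ is \emph{not} negligible and contributes to $L$. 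The handling of $P_t$ is the main obstacle.

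I would resolve it through a martingale approximation of Gordin type. The condition $\dE[\ln \Vert C_{\theta} + N_0\, D_{\alpha} + N_1 \Vert] < 0$ in \eqref{HypLog} forces a negative top Lyapunov exponent for the products in \eqref{ExprCausal}, so the influence of $\cF_t$ on $Z_{t+j}$ decays geometrically in $j$; consequently $\sum_{j\geq 0}\Vert \dE[P_j\mid\cF_0] - \dE[P_j\mid\cF_{-1}]\Vert_2 < +\infty$, and the corrector $\xi_t = \sum_{j\geq 0}\big(\dE[P_{t+j}\mid\cF_t] - \dE[P_{t+j}\mid\cF_{t-1}]\big)$ defines a stationary ergodic $(\cF_t)$-martingale difference with $\sum_{t=1}^{n} P_t = \sum_{t=1}^{n}\xi_t + O_{\dP}(1)$, the discarded part telescoping to a term of order $o(\sqrt{n})$. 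Then $\Delta_t + \xi_t$ is a single stationary ergodic $(\cF_t)$-martingale-difference array, to which the multivariate martingale central limit theorem applies: the conditional variance $n^{-1}\sum_t \dE[(\Delta_t+\xi_t)(\Delta_t+\xi_t)^{\, T}\mid\cF_{t-1}]$ converges a.s.\ to a deterministic matrix by the ergodic theorem of Remark \ref{RemErgo}, each entry being an average of fourth-order isolated terms, and the conditional Lindeberg condition follows from the finiteness of fourth moments on $\Theta_4$. This identifies $L = \sum_{h\in\dZ}\dE[Z_0\, Z_h^{\, T}]$, whose absolute convergence is guaranteed by the same geometric decay. The explicit evaluation of $L$ in terms of $\theta$, $\alpha$, $\sigma_q$ and $\tau_{k,\, q}$, exploiting the vanishing of odd moments to kill the numerous cross terms, is a long but routine computation that I would defer to the Appendix.
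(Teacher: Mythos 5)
Your proof follows a genuinely different route from the paper's. The paper never invokes a Gordin-type approximation: it exploits the finite (lag-one) memory of the random coefficients to show, by purely algebraic manipulations (Lemmas \ref{LemXtsqN1t}--\ref{LemXtsq} and Propositions \ref{PropXtXtk}--\ref{PropXtsq}), that the centered empirical covariances satisfy an exact finite linear system whose resolution exhibits $\sum_{t}\Phi_{t-1}X_t-S_{n-1}\,\theta^{\,*}$ directly as a finite, explicit $\cF_n^{\,*}$-martingale plus a remainder made of isolated terms of order $o(\sqrt{n})$; in effect the corrector that you call $\xi_t$ is computed in closed form there. The martingale CLT of \cite{Duflo97} is then applied exactly as you propose (bracket convergence by ergodicity, conditional Lindeberg by stationarity of the increments), and the conclusion follows by Slutsky. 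Your softer argument --- splitting $Z_t$ into the martingale difference $\Delta_t$ and the predictable part $P_t$, then absorbing $P_t$ by a martingale--coboundary decomposition --- is legitimate and would extend beyond the MA(1) coefficient structure to any coefficients with summable memory, at the price of a less explicit limit matrix $L$ (the paper's $L$ has entries computable from the Appendix, which is what underlies the numerical illustrations of Section \ref{SecAppli}).

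One step of yours needs repair. You justify the summability $\sum_{j\geq 0}\Vert\dE[P_j\mid\cF_0]-\dE[P_j\mid\cF_{-1}]\Vert_2<+\infty$ by the top-Lyapunov condition \eqref{HypLog}. That condition only yields almost sure geometric decay of the random matrix products appearing in \eqref{ExprCausal}; it does not give $L^2$ contraction of those products, and $P_j$ is quadratic in the process, so what you actually need is geometric decay of conditional \emph{second} moments of quadratic functionals. The correct source is the linear recursion \eqref{EqLinMom2}: the conditional versions of the stacked second-order moments satisfy $\dE[\widetilde{\Omega}_j\mid\cF_0]-\Omega=A_2^{\,j}\,(\widetilde{\Omega}_0-\Omega)$, so the required decay follows from $\rho(A_2)<1$ together with $\widetilde{\Omega}_0\in L^2$, i.e.\ from the fourth-moment hypothesis $\Theta\subset\Theta_4$ --- not from \eqref{HypLog}. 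With that substitution your argument goes through.
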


\begin{thm}
\label{ThmRatEst}
Assume that $\Theta \subset \Theta_4 \backslash \Theta^{*}$. Then, we have the rate of convergence
\begin{equation*}
\limsup_{n\, \rightarrow\, +\infty} ~ \frac{n}{2\, \ln \ln n}\, \Vert \wht - \theta^{\, *} \Vert^{\, 2} < +\infty \hsp \textnormal{a.s.}
\end{equation*}
\end{thm}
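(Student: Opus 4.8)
The plan is to reduce the statement to a law of the iterated logarithm for a martingale and then to control a correlated remainder coming from the serial dependence in the coefficients. Setting $\xi_t = X_t - \Phi_{t-1}^{\,T}\,\theta^{\,*}$, the normal equations behind \eqref{OLS} together with the identity $\Lambda_0\,\theta^{\,*} = L_1$ give the exact relation $\wht - \theta^{\,*} = S_{n-1}^{\,-1}\,M_n$ with $M_n = \sum_{t=1}^{n} \Phi_{t-1}\,\xi_t$. By ergodicity (Remark \ref{RemErgo}) and the invertibility of $\Lambda_0$ guaranteed by $\Theta \subset \Theta_4 \backslash \Theta^{*}$, we have $n^{-1}\,S_{n-1} \cvgas \Lambda_0$, hence $n\,S_{n-1}^{\,-1} \cvgas \Lambda_0^{\,-1}$. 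Since $\frac{n}{2\ln\ln n}\,\Vert \wht - \theta^{\,*}\Vert^{2} \leq \Vert n\,S_{n-1}^{\,-1}\Vert^{2}\,\frac{\Vert M_n\Vert^{2}}{2\,n\ln\ln n}$ and the first factor converges a.s. to $\Vert\Lambda_0^{\,-1}\Vert^{2} < +\infty$, it suffices to prove that $\limsup_n \Vert M_n\Vert^{2}/(2\,n\ln\ln n) < +\infty$ a.s.

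Next I would isolate the genuine martingale part of $M_n$. Using \eqref{RCAR}, decompose $\xi_t = V_t + W_{t-1}$ where $V_t = \veps_t + \sum_{k=1}^{p} \eta_{k,\,t}\,X_{t-k}$ satisfies $\dE[V_t \mid \cF_{t-1}] = 0$, while $W_{t-1} = \sum_{k=1}^{p}(\theta_k - \theta_k^{\,*})\,X_{t-k} + \sum_{k=1}^{p}\alpha_k\,\eta_{k,\,t-1}\,X_{t-k}$ is $\cF_{t-1}$-measurable, with $\dE[\Phi_{t-1}\,W_{t-1}] = 0$ because $\dE[\Phi_{t-1}\,\xi_t] = L_1 - \Lambda_0\,\theta^{\,*} = 0$. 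Then $\sum_{t=1}^{n}\Phi_{t-1}\,V_t$ is a square-integrable $(\cF_t)$-martingale whose predictable quadratic variation $\sum_{t=1}^{n}\Phi_{t-1}\,\dE[V_t^{\,2}\mid\cF_{t-1}]\,\Phi_{t-1}^{\,T}$, by the ergodic theorem and the fourth-order moment control from $\Theta \subset \Theta_4$, satisfies $n^{-1}\langle\cdot\rangle_n \cvgas$ a deterministic matrix. The vector-valued law of the iterated logarithm for martingales, applied coordinatewise, then gives $\limsup_n \Vert \sum_{t=1}^{n}\Phi_{t-1}\,V_t\Vert^{2}/(2\,n\ln\ln n) < +\infty$ a.s.

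The essential difficulty — and the whole point of the model — is that the residuals $\xi_t$ are \emph{not} martingale differences: $W_{t-1}$ does not vanish in conditional expectation, so $\sum_{t=1}^{n}\Phi_{t-1}\,W_{t-1}$ must be handled separately even though it is centered. I would resolve this through a martingale--coboundary (Gordin-type) decomposition tailored to the lag-one memory of the coefficients. Writing $U_t = \Phi_{t-1}\,W_{t-1}$, the contraction hypothesis \eqref{HypLog} yields geometric decay of the dependence and hence $\sum_{j \geq 0}\Vert \dE[U_0 \mid \cF_{-j}]\Vert_{2} < +\infty$, so that $U_t = D_t + (Z_t - Z_{t-1})$ with $(D_t)$ an $(\cF_t)$-martingale difference sequence and $(Z_t)$ a stationary, square-integrable coboundary. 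The coboundary telescopes, $\sum_{t=1}^{n} U_t = \sum_{t=1}^{n} D_t + Z_n - Z_0$; the martingale $\sum_{t=1}^{n} D_t$ is treated exactly as in the previous step, while $Z_n$ is an isolated term in the sense of Remark \ref{RemErgo}, for which the fourth-order moments available under $\Theta \subset \Theta_4$ allow a Borel--Cantelli argument giving $Z_n = o(\sqrt{n\ln\ln n})$ a.s. Adding the two martingale contributions (whose joint predictable variation reproduces the long-run matrix $L$ of Theorem \ref{ThmTLCEst}) delivers the required bound on $\Vert M_n\Vert^{2}$.

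The main obstacle is this remainder $\sum_{t=1}^{n}\Phi_{t-1}\,W_{t-1}$: verifying the summability condition and bounding the telescoping terms through the isolated-term estimates is precisely where the serial correlation enters, and it is the step that departs from the classical white-noise analysis of \cite{NichollsQuinn81b}, in which $W_{t-1} \equiv 0$ and the estimator error is directly a martingale.
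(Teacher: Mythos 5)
Your overall architecture is sound and genuinely different from the paper's. You work directly with the exact identity $\wht-\theta^{\,*}=S_{n-1}^{\,-1}\sum_t\Phi_{t-1}\,\xi_t$, split $\xi_t$ into the true martingale-difference part $V_t=\veps_t+\sum_k\eta_{k,\,t}X_{t-k}$ and the predictable, centered part $W_{t-1}$, and then invoke a Gordin martingale--coboundary decomposition to absorb $\sum_t\Phi_{t-1}W_{t-1}$. The paper never does this: it relies on the explicit algebraic relations of Lemmas \ref{LemXtsqN1t}--\ref{LemXtsq}, which exploit the lag-one (MA(1)) memory of the coefficients to write each recentered empirical covariance \emph{exactly} as a finite linear combination of other empirical covariances plus a martingale plus isolated terms; solving that finite linear system yields the decomposition \eqref{DecompTlc}, after which Stout's law of the iterated logarithm and $\Vert R_n\Vert=o(\sqrt{n})$ finish the proof. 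Your route is softer and would extend beyond lag-one memory, at the price of having to verify an infinite-series mixing condition; the paper's route avoids any such condition but is tied to the specific MA(1) structure.

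The genuine gap is in the sentence where you claim that ``the contraction hypothesis \eqref{HypLog} yields geometric decay of the dependence and hence $\sum_{j\geq 0}\Vert\dE[U_0\mid\cF_{-j}]\Vert_2<+\infty$.'' Condition \eqref{HypLog} is a negative-Lyapunov-exponent (logarithmic moment) assumption; it guarantees almost sure contraction of the matrix products in \eqref{ExprCausal} but says nothing about contraction in $L_2$, and it is classical that a negative top Lyapunov exponent is compatible with exploding second moments. Since $U_t=\Phi_{t-1}W_{t-1}$ is quadratic in the process, putting $\dE[U_j\mid\cF_0]$ in $L_2$ and making it geometrically small requires control at the level of \emph{fourth} moments of $X_t$, i.e.\ the spectral-radius conditions $\rho(A_2)<1$ and $\rho(A_4)<1$ encoded in $\Theta_2$ and $\Theta_4$ (Appendix \ref{SecAppMom}), via the conditioned version of the linear recursion \eqref{EqLinMom2}. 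The conclusion you need is true under the theorem's hypothesis $\Theta\subset\Theta_4$, but not for the reason you give, and this summability is precisely the crux of your treatment of the serial correlation; as written, the key step of the remainder analysis is unjustified. (The rest --- the coordinatewise LIL for the stationary ergodic martingales, $n\,S_{n-1}^{\,-1}\cvgas\Lambda_0^{\,-1}$, and $Z_n=o(\sqrt{n})$ a.s.\ for an identically distributed $L_2$ sequence via the tail-sum Borel--Cantelli argument --- is fine.)
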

In particular, we can see that despite the correlation in the noise of the coefficients, the hypotheses are sufficient to ensure that the estimator achieves the usual rate of convergence in stable autoregressions, \textit{i.e.}
\begin{equation*}
\Vert \wht - \theta^{\, *} \Vert = O\bigg( \sqrt{\frac{\ln \ln n}{n}} \bigg) \hsp \textnormal{a.s.}
\end{equation*}
Note that $\sigma_2$ and $\tau_{k,\, 4}$ are involved in $\Theta_2$ to obtain the almost sure convergence, whereas $\sigma_4$ and $\tau_{k,\, 8}$ are involved in $\Theta_4$ to get the asymptotic normality and the rate of convergence. 

\begin{rem}
\label{RemHyp}
Assuming that $(\eta_{k,\, t})$ and $(\eta_{\ell,\, t})$ are uncorrelated for $k \neq \ell$ is a matter of simplification of the calculations. We might as well consider a covariance structure for $(\eta_{1,\, t}, \hdots, \eta_{p,\, t})$ and obtain the same kind of results, at the cost of additional parameters and refined hypotheses (see \textit{e.g.} assumption (iii) in \cite{NichollsQuinn81b}).
\end{rem}

\section{Illustration and Perspectives}
\label{SecAppli}

To conclude this short note, let us illustrate a consequence of the presence of correlation in the coefficients when testing the significance of $\theta$. For the sake of simplicity, take $p=2$ and consider the test of $\cH_0 : ``\theta_2 = 0"$ against $\cH_1 : ``\theta_2 \neq 0"$. Thanks to Remark \ref{RemEstParam}, it is possible to show that, under $\cH_0$,
\begin{equation*}
\theta_1^{\, *} = \frac{\theta_1\, (\theta_1^{\, 2} + \beta_2 - (1-\beta_1)^{\, 2} - \beta_1\, ( \beta_1 + \beta_2 - 1))}{(\theta_1 - 2\, \beta_1 - \beta_2 + 1)\, (\theta_1 + 2\, \beta_1 + \beta_2 - 1)}
\end{equation*}
and
\begin{equation*}
\theta_2^{\, *} = \frac{\theta_1^{\, 2}\, \beta_2 - \beta_1\, ((1-\beta_2)^{\, 2} + 4\, \beta_1\, ( \beta_1 + \beta_2 - 1))}{(\theta_1 - 2\, \beta_1 - \beta_2 + 1)\, (\theta_1 + 2\, \beta_1 + \beta_2 - 1)}
\end{equation*}
where $\beta_1 = \alpha_1\, \tau_{1,\, 2}$ and $\beta_2 = \alpha_2\, \tau_{2,\, 2}$. As a consequence, $\sqrt{n}\, \vert \wh{\theta}_{2,\, n} \vert$ is almost surely divergent and even if $\theta_2=0$, we may detect a non-zero-mean coefficient where there is in fact a zero-mean autocorrelated one. Worse, suppose also that $\alpha_2 = \tau_{2,\, 2} = 0$ so that there is no direct influence of $X_{t-2}$ on $X_{t}$. Then, we still generally have $\theta_2^{\, *} \neq 0$. In other words, a correlation in the random coefficient associated with $X_{t-1}$, \textit{i.e.} $\alpha_1 \neq 0$, generates a spurious detection of a direct influence of $X_{t-2}$ on $X_{t}$. This phenomenon can be observed on some simple but representative examples. To test $\cH_0$, we infer two statistics from Thm. 4.1 of \cite{NichollsQuinn81b} and the procedure of estimation given by the authors,
\begin{equation*}
Z_{1,\, n} = \frac{\sqrt{n}\, \wh{\theta}_{2,\, n}}{v_{1,\, n}} \hsp \text{and} \hsp Z_{2,\, n} = \frac{\sqrt{n}\, \wh{\theta}_{2,\, n}}{v_{2,\, n}}.
\end{equation*}
The first one takes into account random coefficients (which means that $\tau_{1,\, 2}$ and $\tau_{2,\, 2}$ are estimated to get $v_{1,\, n}$) and the second one is built assuming fixed coefficients ($\tau_{1,\, 2}$ and $ \tau_{2,\, 2}$ are not estimated but set to 0). Note that we make sure that $\Theta \subset \Theta_4$ when we vary the settings. Unsurprisingly, $Z_{2,\, n}$ is less reliable since it does not model the random effect owing to $\tau_{1,\, 2} > 0$ and the corrected statistic $Z_{1,\, n}$ behaves as expected when $\alpha_1 = 0$.

\begin{figure}[h!]
\centering
\includegraphics[width=16cm]{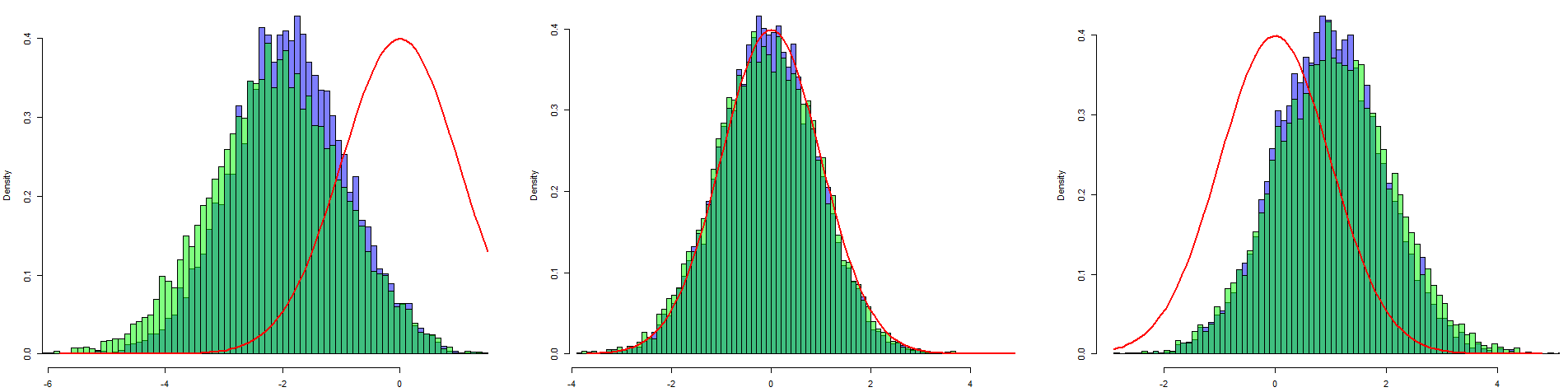}
\caption{Empirical distribution of $Z_{1,\, n}$ (blue) and $Z_{2,\, n}$ (green) for Gaussian noises $\eta_{1,\, 0} \sim \cN(0, 0.2)$ and $\veps_0 \sim \cN(0, 1)$, $\theta_1=0.3$, $\theta_2=\alpha_2=\tau_{2,\, 2} = 0$,  and $\alpha_1 \in \{ -0.5, 0, 0.3 \}$ from left to right, obtained with $N=10000$ repetitions of size $n=500$. The red curve is the theoretical $\cN(0,1)$.}
\label{FigEmpDistr}
\end{figure}

However, as it is visible on Figure \ref{FigEmpDistr} and confirmed by Figure \ref{FigRejRate}, when $\alpha_1 \neq 0$ both tests reject $\cH_0$ with a rate growing well above the 5\% threshold that we used in this experiment even if, for the same reason as before, $Z_{1,\, n}$ appears slightly more robust. This is indeed what theory predicts. Theorem \ref{ThmTLCEst} also confirms that, once correctly recentered, $Z_{1,\, n}$ and $Z_{2,\, n}$ must remain asymptotically normal.

\begin{figure}[h!]
\centering
\includegraphics[width=16cm]{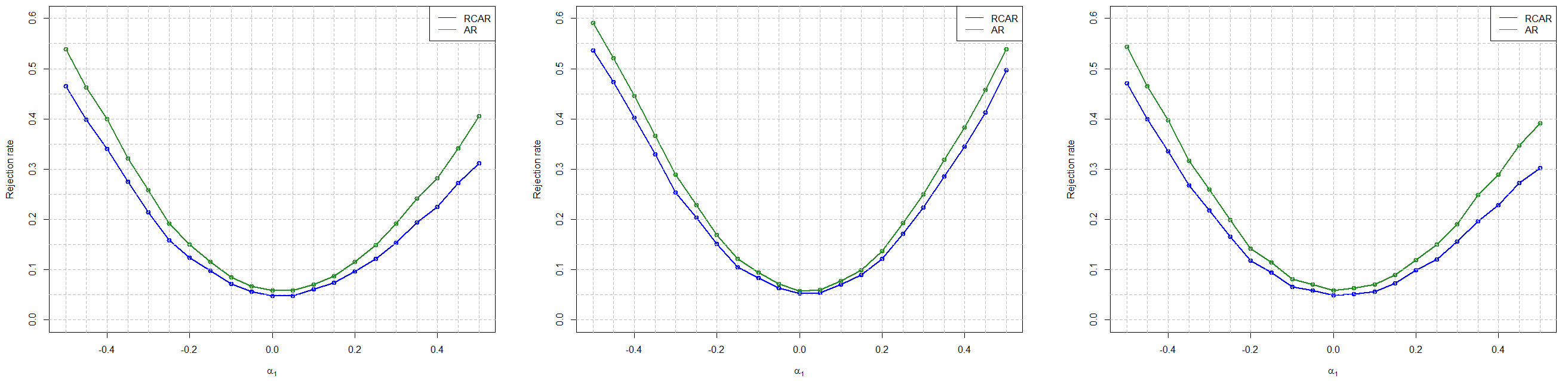}
\caption{Rate of rejection of $\cH_0$ for numerous values of $\alpha_1 \in [-0.5, 0.5]$ at the 5\% threshold. The simulations are conducted with the setting of Figure \ref{FigEmpDistr} and, from left to right, with $\theta_1 = -0.3$, $\theta_1 = 0$ and $\theta_1 = 0.3$.}
\label{FigRejRate}
\end{figure}

This example clearly highlights the question of whether we can build a consistent estimate for $\theta$ of course, but also for the covariance in the coefficients. In \cite{ProiaSoltane18}, this is done (see Sec. 4), mainly due to the fact that calculations are feasible when $p=1$, and a test for serial correlation followed. We can see through this note that this is not as easy in the general case and that this must be a reflection for future studies. Besides, we only considered the OLS but it would be worth working either with a QMLE or with a two-stage procedure to be able to exhibit a reliable estimate despite a possible serial correlation. We conclude by emphasizing the importance of developing statistical testing procedures for correlation in the random coefficients in the future works dedicated to RCAR processes. For practitioners, testing for serial correlation should appear as logic consequence of the tests for randomness in the coefficients, especially when the main hypothesis of the model is the existence of temporal correlations in the phenomenon.

\bigskip

\noindent \textbf{Acknowledgements}. The authors warmly thank the anonymous reviewer for the careful reading and for making numerous useful corrections to improve the readability of the paper. They also thank the research program PANORisk of the Région Pays de la Loire.

\section{Technical proofs}
\label{SecPro}

\subsection{Proof of Proposition \ref{PropErgo}}
\label{SecProErgo}

From our hypotheses on the noises $(\eta_{k,\, t})$, the matrix-valued process $(C_{\theta} + N_{t-1}\, D_{\alpha} + N_{t})$ is strictly stationary and ergodic. Thus, it follows from \eqref{HypLog} that we can find $\delta < 0$ and a random $k_0$ such that, as soon as $k \geq k_0$,
\begin{equation*}
\frac{1}{k} \sum_{\ell=0}^{k-1} \ln \Vert C_{\theta} + N_{t-\ell-1}\, D_{\alpha} + N_{t-\ell} \Vert < \delta \hsp \textnormal{a.s.}
\end{equation*}
See also Lem. 1.1 of \cite{Brandt86} for a similar reasoning. For $n \geq 1$, consider the truncation
\begin{equation*}
\Phi_{t,\, n} = E_{t} + \sum_{k=1}^{n} E_{t-k}\, \prod_{\ell=0}^{k-1} (C_{\theta} + N_{t-\ell-1}\, D_{\alpha} + N_{t-\ell}).
\end{equation*}
Then, by the triangle inequality, for $n$ large enough we have
\begin{equation*}
\Vert \Phi_{t,\, n} \Vert \leq \vert \veps_{t} \vert + \sum_{k=1}^{k_0-1} \vert \veps_{t-k} \vert\, \prod_{\ell=0}^{k-1} \Vert C_{\theta} + N_{t-\ell-1}\, D_{\alpha} + N_{t-\ell} \Vert + \sum_{k=k_0}^{n} \vert \veps_{t-k} \vert\, \de^{\delta k}
\end{equation*}
and Lem. 2.2 of \cite{BerkesEtAl03} ensures the a.s. convergence of the last term under \eqref{HypLog}. Thus, the limit superior of $\Vert \Phi_{t,\, n} \Vert$ is finite and so is \eqref{ExprCausal} with probability 1. Moreover, it is easy to check that this is a solution to the recurrence \eqref{RCARVec}. Finally, the strict stationarity and ergodicity of $(\Phi_{t})$ may be obtained following the same reasoning as in \cite{NichollsQuinn81b}. Indeed, one can see that there exists $\phi$ independent of $t$ such that $\Phi_{t} = \phi((E_{t}, N_{t}), (E_{t-1}, N_{t-1}), \hdots)$ and the strict stationarity and ergodicity of the process $(E_{t}, N_{t})$ are passed to $(\Phi_{t})$. That also implies the $\cF_{t}$-measurability of $(X_{t})$. \qed
 
\subsection{Proof of Theorem \ref{ThmTLCEst}}
\label{SecProTLC}

Let the filtration generated by $\cF_0^{\, *} = \sigma(\Phi_0, \eta_{1,\, 0}, \hdots, \eta_{p,\, 0})$ and, for $n \geq 1$, by
\begin{equation}
\label{FiltMart}
\cF_{n}^{\, *} = \sigma(\Phi_0, \eta_{1,\, 0}, \hdots, \eta_{p,\, 0}, (\veps_1, \eta_{1,\, 1}, \hdots, \eta_{p,\, 1}), \hdots, (\veps_n, \eta_{1,\, n}, \hdots, \eta_{p,\, n})).
\end{equation}
In the sequel, to avoid a huge amount of different notation, $M_{n}$ and $R_{n}$ will be generic terms, not necessarily identical from one line to another, designating vector $\cF_{n}^{\, *}$-martingales (see \textit{e.g.} \cite{Duflo97}) and isolated terms, respectively. We make use of the Appendix for some computational results and we start by two fundamental propositions showing that the recentered empirical covariances are $\cF_{n}^{\, *}$-martingales, up to residual terms. To this aim, we need to build the matrix $M_{\alpha, \beta}$ as follows. First, consider
\begin{equation*}
M_{\theta} = \begin{pmatrix}
\theta_1 & 0 & \cdots & \cdots & 0 \\
\theta_2 & \ddots & \ddots & & \vdots \\
\vdots & \ddots & \ddots & \ddots & \vdots \\
\vdots & & \ddots & \ddots & 0 \\
\theta_p & \cdots & \cdots & \theta_2 & \theta_1 
\end{pmatrix} + \begin{pmatrix}
0 & \theta_2 & \cdots & \cdots & \theta_p \\
\vdots & \vdots & & \udots & 0 \\
\vdots & \vdots & \udots & & \vdots \\
\vdots & \theta_p & & & \vdots \\
0 & \cdots & \cdots & \cdots & 0
\end{pmatrix}
\end{equation*}
and use it to define
\begin{equation*}
M_{\alpha, \beta} = \begin{pmatrix}
\frac{1}{1 - 2\, \beta_1} & 0 & \cdots & \cdots & 0 \\
0 & 1 & \ddots & & \vdots \\
\vdots & \ddots & \ddots & \ddots & \vdots \\
\vdots & & \ddots & \ddots & 0 \\
0 & \cdots & \cdots & 0 & 1
\end{pmatrix}\, \left[ M_{\theta} + \begin{pmatrix}
0 & \beta_2 & 0 & \cdots & 0 \\
\beta_1 & 0 & 0 & & \vdots \\
0 & \ddots & \ddots & \ddots & \vdots \\
\vdots & \ddots & \ddots & \ddots & 0 \\
0 & \cdots & 0 & \beta_1 & 0
\end{pmatrix} \right]
\end{equation*}
where $\beta_1 = \alpha_1\, \tau_{1,\, 2}$ and $\beta_2 = \alpha_2\, \tau_{2,\, 2} + \hdots + \alpha_p\, \tau_{p,\, 2}$. Then, set $U_0$ as the first column of $M_{\alpha, \beta}$ and set $K$ as the remaining part of $M_{\alpha, \beta}$ to which we add a zero vector on the right. Using the notation of the reasonings below, the pathological set $\Theta^{*}$ is enhanced with the situations where $2\, \beta_1 = 1$, $\alpha_1\, \beta_1 = 1$, $\det(I_p-K)=0$, $s_0=1$ or $U^{\, T}\, (I_p-K)^{-1}\, U_0 = 1-s_0$.

\begin{prop}
\label{PropXtXtk}
Assume that $\Theta \subset \Theta_4 \backslash \Theta^{*}$. Then, we have the decomposition
\begin{eqnarray*}
\sum_{t=1}^{n} \Phi_{t-1}\, X_{t} - n\, L_1 & = &  (I_{p} - K)^{-1}\, U_0\, \bigg( \sum_{t=1}^{n} X_{t}^{\, 2} - n\, \ell_0 \bigg) + M_{n} + R_{n}
\end{eqnarray*}
where $M_{n}$ is a vector $\cF_{n}^{\, *}$-martingale and the remainder term satisfies $\Vert R_{n} \Vert = o(\sqrt{n})$ a.s.
\end{prop}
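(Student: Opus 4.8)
The plan is to obtain, for each lag $j\in\{1,\dots,p\}$, a scalar identity for the recentered empirical covariance $\gamma_j:=\sum_{t=1}^{n}(X_{t}X_{t-j}-\ell_j)$ (the $j$-th entry of the left-hand side), expressed through the recentered empirical variance $\gamma_0:=\sum_{t=1}^{n}(X_t^{\,2}-\ell_0)$, a martingale and a negligible remainder, and then to read off the proposition by stacking these $p$ identities and inverting $I_p-K$. First I would multiply the autoregression \eqref{RCAR} by $X_{t-j}$, sum over $t$, and recenter using the stationary identities $\ell_j=\sum_{k}\theta_k\ell_{|k-j|}+\sum_k\alpha_k\,\dE[\eta_{k,\,t-1}X_{t-k}X_{t-j}]$ obtained by taking expectations. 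This yields
\[
\gamma_j=\sum_{k=1}^{p}\theta_k\sum_{t=1}^{n}(X_{t-k}X_{t-j}-\ell_{|k-j|})+\sum_{k=1}^{p}\alpha_k\sum_{t=1}^{n}\big(\eta_{k,\,t-1}X_{t-k}X_{t-j}-\dE[\eta_{k,\,t-1}X_{t-k}X_{t-j}]\big)+M_n^{(j)},
\]
where $M_n^{(j)}$ gathers the two sums $\sum_k\sum_t\eta_{k,\,t}X_{t-k}X_{t-j}$ and $\sum_t\veps_tX_{t-j}$, which are $\cF_n^{\,*}$-martingales because $\eta_{k,\,t}$ and $\veps_t$ are centered and independent of $\cF_{t-1}^{\,*}$ while $X_{t-k}X_{t-j}$ is $\cF_{t-1}^{\,*}$-measurable.

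The crux is the second sum: since $\eta_{k,\,t-1}$ is $\cF_{t-1}^{\,*}$-measurable, $\eta_{k,\,t-1}X_{t-k}X_{t-j}$ is \emph{not} a martingale increment, and isolating its drift is precisely what reveals the effect of the serial correlation. I would substitute the one-step expansion $X_{t-1}=\sum_{m=1}^{p}\eta_{m,\,t-1}X_{t-1-m}+Z_{t-1}$, with $Z_{t-1}$ the $\cF_{t-2}^{\,*}$-measurable part, and condition on $\cF_{t-2}^{\,*}$. Using that the $(\eta_{k,\,t})$ are independent, centered, with vanishing odd moments, the drift vanishes when $k,j\geq 2$ (pure martingale increment); equals $\tau_{1,\,2}X_{t-2}X_{t-j}$ when $k=1,\,j\geq 2$ and $\tau_{k,\,2}X_{t-k}X_{t-1-k}$ when $k\geq 2,\,j=1$ (up to martingale increments); and, in the diagonal case $k=j=1$, the surviving cross term is $2\,\tau_{1,\,2}X_{t-1}X_{t-2}$ --- the factor $2$ being exactly what later produces the normalization $1/(1-2\,\beta_1)$ in $M_{\alpha,\beta}$ and forces $2\,\beta_1\neq1$ into $\Theta^{*}$. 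After recentering, these drifts reassemble into $\beta_1=\alpha_1\tau_{1,\,2}$ and $\beta_2=\sum_{k\geq 2}\alpha_k\tau_{k,\,2}$ multiplied by the canonical quantities $\gamma_{|j-2|}$ and $\gamma_1$.

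It then remains to reduce every $\sum_t X_{t-k}X_{t-j}$ to $\gamma_{|k-j|}$. Each such sum equals $\gamma_{|k-j|}$ up to an index shift whose boundary consists of a bounded number of products $X_sX_{s-d}$ (and $\eta$-weighted analogues) taken near the two endpoints; because $\Theta\subset\Theta_4$, the squares of these products are integrable fourth-order isolated terms, hence $o(n)$ by the ergodic argument of Remark \ref{RemErgo}, so each product is $o(\sqrt n)$ and the whole boundary is absorbed into $R_n$.

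Finally I would stack the $p$ identities: the $\theta$-contributions assemble into $M_\theta$ acting on $(\gamma_0,\dots,\gamma_{p-1})$, the serial-correlation drifts add the $\beta$-matrix, and dividing the $j=1$ line by $1-2\,\beta_1$ (valid off $\Theta^{*}$) produces $M_{\alpha,\beta}$ exactly. Taking its first column as $U_0$ and its remaining columns, padded by a zero column since $\gamma_p$ never appears on the right, as $K$, the system reads $V_n=U_0\,\gamma_0+K\,V_n+M_n+R_n$ with $V_n=\sum_t\Phi_{t-1}X_t-nL_1$ and $\gamma_0=\sum_t(X_t^{\,2}-\ell_0)$. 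Since $\det(I_p-K)\neq0$ off $\Theta^{*}$, inversion gives $V_n=(I_p-K)^{-1}U_0\,\gamma_0+(I_p-K)^{-1}(M_n+R_n)$, and a linear image of a martingale is again a martingale while a linear image of an $o(\sqrt n)$ term stays $o(\sqrt n)$, which is the claim. The main obstacle is the drift computation of the second paragraph: carrying the martingale/drift split through every case, and in particular tracking the diagonal $k=j=1$ contribution together with the cancellation of odd moments so that only $2\,\tau_{1,\,2}X_{t-1}X_{t-2}$ survives, is where the real work lies; the matrix bookkeeping afterwards is routine but must be matched carefully to the definitions of $M_\theta$, $M_{\alpha,\beta}$, $U_0$ and $K$.
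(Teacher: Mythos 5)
Your proposal is correct and follows essentially the same route as the paper: the case-by-case drift computation for $\eta_{k,\,t-1}X_{t-k}X_{t-j}$ --- including the diagonal $k=j=1$ term producing the factor $2\,\tau_{1,\,2}$ and the normalization $1/(1-2\,\beta_1)$ --- is exactly the content of Lemmas \ref{LemXtsqN1t} and \ref{LemXtXti}, and the stacking and inversion of $I_p-K$ is precisely how the paper deduces the proposition from them. The only cosmetic difference is that you recenter from the outset via stationarity identities, whereas the paper identifies $L_1=(I_p-K)^{-1}U_0\,\ell_0$ a posteriori by the ergodic theorem.
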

\begin{proof}
The proposition is established through Lemma \ref{LemXtXti}. Indeed, this statement shows that, as soon as $\Theta \cap \Theta^{*} = \varnothing$, there is a decomposition of the form
\begin{eqnarray*}
\frac{1}{n} \sum_{t=1}^{n} \Phi_{t-1}\, X_{t} & = & (I_{p} - K)^{-1}\, U_0\, \frac{1}{n} \sum_{t=1}^{n} X_{t}^{\, 2} + \frac{M_{n}}{n} + \frac{R_{n}}{n}.
\end{eqnarray*}
By ergodicity, taking the limit on both sides gives $L_1 = (I_{p} - K)^{-1}\, U_0\, \ell_0$. The remainder term $R_{n}$ is made of second-order isolated terms so, by Remark \ref{RemErgo} and because $\Theta \subset \Theta_4$, we must have $\Vert R_{n} \Vert = o(\sqrt{n})$ a.s. \qed
\end{proof}

\begin{prop}
\label{PropXtsq}
Assume that $\Theta \subset \Theta_4 \backslash \Theta^{*}$. Then, we have the decomposition
\begin{eqnarray*}
\sum_{t=1}^{n} X_{t}^{\, 2} - n\, \ell_0 & = &  M_{n} + R_{n}
\end{eqnarray*}
where $M_{n}$ is a scalar $\cF_{n}^{\, *}$-martingale and the remainder term satisfies $\vert R_{n} \vert = o(\sqrt{n})$ a.s.
\end{prop}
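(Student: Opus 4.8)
The plan is to isolate, inside the recentred sum $\sum_{t=1}^{n} X_{t}^{\,2} - n\,\ell_0$, a genuine scalar $\cF_{n}^{\,*}$-martingale and to push everything else into boundary \emph{isolated terms} controlled by Remark \ref{RemErgo}.

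First I would split $X_{t}$ according to the filtration \eqref{FiltMart}. Writing $X_{t} = A_{t} + B_{t}$ with $A_{t} = \sum_{k=1}^{p} (\theta_k + \alpha_k\, \eta_{k,\, t-1})\, X_{t-k}$ and $B_{t} = \sum_{k=1}^{p} \eta_{k,\, t}\, X_{t-k} + \veps_{t}$, the part $A_{t}$ is $\cF_{t-1}^{\,*}$-measurable while $\dE[B_{t} \mid \cF_{t-1}^{\,*}] = 0$. Squaring and conditioning then gives $\dE[X_{t}^{\,2} \mid \cF_{t-1}^{\,*}] = A_{t}^{\,2} + \sum_{k=1}^{p} \tau_{k,\, 2}\, X_{t-k}^{\,2} + \sigma_{2}$, the cross contributions vanishing because the $(\eta_{k,\, t})$ are centred, mutually uncorrelated and independent of $\veps_{t}$ and of $\cF_{t-1}^{\,*}$. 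Consequently $X_{t}^{\,2} - \dE[X_{t}^{\,2} \mid \cF_{t-1}^{\,*}] = 2\, A_{t}\, B_{t} + (B_{t}^{\,2} - \dE[B_{t}^{\,2} \mid \cF_{t-1}^{\,*}])$ is an $\cF_{n}^{\,*}$-martingale increment, and summing it yields a first martingale $M_{n}$.

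It then remains to compare $\sum_{t=1}^{n} \dE[X_{t}^{\,2} \mid \cF_{t-1}^{\,*}]$ with $n\,\ell_0$. The compensator is a quadratic form in $(X_{t-1}, \hdots, X_{t-p})$ whose expansion couples $X_{t}^{\,2}$ to every lagged product $X_{t-k}\, X_{t-\ell}$ and, through the factors $\alpha_k\, \eta_{k,\, t-1}$, to the $\eta$-weighted products $\eta_{k,\, t-1}\, X_{t-k}\, X_{t-\ell}$. I would therefore handle the whole family of second-order quantities at once: using the vector recursion \eqref{RCARVec} one expresses each summed product as a linear combination of the others, of $\sum_{t=1}^{n} X_{t}^{\,2}$ and of the autocovariances $\ell_i$, up to a martingale and up to the finitely many endpoint products created by the index shift $\sum_{t} X_{t-k}\, X_{t-\ell} = \sum_{t} X_{t}\, X_{t-\ell+k} + (\textnormal{endpoints})$. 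Proposition \ref{PropXtXtk}, which already expresses $\sum_{t=1}^{n} \Phi_{t-1}\, X_{t}$ — whose $k$-th entry is $\sum_{t=1}^{n} X_{t}\, X_{t-k}$ — through $\sum_{t=1}^{n} X_{t}^{\,2}$ and the vector $(I_{p} - K)^{-1}\, U_0$, closes the system. Solving the resulting scalar equation for $\sum_{t=1}^{n} X_{t}^{\,2} - n\,\ell_0$ is exactly where the non-degeneracy conditions gathered in $\Theta^{*}$ (namely $2\, \beta_1 \neq 1$, $\alpha_1\, \beta_1 \neq 1$, $\det(I_p - K) \neq 0$, $s_0 \neq 1$ and $U^{\, T}\, (I_p - K)^{-1}\, U_0 \neq 1 - s_0$) guarantee that the coefficient of $\sum_{t=1}^{n} X_{t}^{\,2}$ is invertible; the explicit identification of these coefficients is the computation I would relegate to the Appendix.

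Finally, all the leftover pieces — the telescoped endpoints and every product not absorbed into $M_{n}$ or into $n\,\ell_0$ — are second-order isolated terms in the sense of \eqref{Ergo}. Since $\Theta \subset \Theta_{4}$, the square of such a term is a fourth-order isolated term with finite mean, so $\frac{1}{n}\sum_{t=1}^{n} (\,\cdot\,)_{t}^{\,2}$ converges by ergodicity and forces the generic endpoint term to be $o(\sqrt{n})$; collecting them yields $R_{n}$ with $\vert R_{n} \vert = o(\sqrt{n})$ a.s. I expect the main obstacle to lie precisely in this middle step: because the serial correlation makes the compensator non-diagonal, one must drag the $\eta$-weighted cross-products through the bookkeeping and check that the coupled linear system is non-singular, i.e. that $\Theta^{*}$ really captures all the degeneracies.
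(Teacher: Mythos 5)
Your proposal is correct and follows essentially the same route as the paper: the paper's own argument develops $X_t^{\,2}$ via \eqref{RCAR} (Lemma \ref{LemXtsq}, itself resting on the recursions for $\sum_t X_t^{\,2}\,\eta_{1,\,t}$ and $\sum_t X_t^{\,2}\,\eta_{1,\,t}^{\,2}$), closes the resulting linear system with Proposition \ref{PropXtXtk}, and divides by the factor $1-U^{\,T}(I_p-K)^{-1}U_0/(1-s_0)$, which is exactly your coupled-system step with the same non-degeneracy conditions from $\Theta^{*}$ and the same ergodicity control of the isolated remainder terms. Your explicit Doob-decomposition framing of the martingale increment $X_t^{\,2}-\dE[X_t^{\,2}\,\vert\,\cF_{t-1}^{\,*}]$ is only a cosmetic variant of the paper's generic $\delta_t$ notation.
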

\begin{proof}
This is a consequence of Lemma \ref{LemXtsq}. As soon as $\Theta \cap \Theta^{*} = \varnothing$, it follows from this lemma and Proposition \ref{PropXtXtk}, using the same notation, that
\begin{eqnarray*}
\bigg( \sum_{t=1}^{n} X_{t}^{\, 2} - n\, \ell_0 \bigg) \bigg[ 1-\frac{U^{\, T}\, (I_p-K)^{-1}\, U_0}{1-s_0} \bigg] & = & \frac{U^{\, T}\, M_{n} + R_{n}}{1-s_0}.
\end{eqnarray*}
This concludes the proof since, likewise, the remainder term $R_{n}$ is a linear combination of second-order isolated terms. \qed
\end{proof}

We now come back to the proof of Theorem \ref{ThmTLCEst}. The keystone of the reasoning consists in noting that there is a matrix $A$ such that
\begin{equation*}
\sum_{t=1}^{n} \Phi_{t-1}\, X_{t} - S_{n-1}\, \theta^{\, *} = A\, \bigg( \sum_{t=1}^{n} \Phi_{t-1}\, X_{t} - n\, L_1 \bigg) - \theta^{\, *}\, \bigg( \sum_{t=1}^{n} X_{t}^{\, 2} - n\, \ell_0 \bigg) + R_{n}
\end{equation*}
since, by ergodicity, $0 = L_1 - \Lambda_0\, \theta^{\, *} = A\, L_1 - \theta^{\, *}\, \ell_0$. The combination of Propositions \ref{PropXtXtk} and \ref{PropXtsq} enables to obtain the decomposition
\begin{equation}
\label{DecompTlc}
\sum_{t=1}^{n} \Phi_{t-1}\, X_{t} - S_{n-1}\, \theta^{\, *} = M_{n} + R_{n}
\end{equation}
where, as in the previous proofs, $M_{n}$ is a vector $\cF_{n}^{\, *}$-martingale and the remainder term satisfies $\Vert R_{n} \Vert = o(\sqrt{n})$ a.s. Let us call $m_{n}$ a generic element of $M_{n}$. As can be seen from the details of the Appendix, it always takes the form of
\begin{equation}
\label{GenMart}
m_{n} = \sum_{t=1}^{n} X_{t-d_1}^{\, a_1}\, X_{t-d_2}^{\, a_2}\, \eta_{k,\, t-d_3}^{\, a_3}\, \eta_{\ell,\, t-d_4}^{\, a_4}\, \nu_{t}
\end{equation}
where $0 < d_3, d_4 \leq d_1, d_2$, $a_i \in \{ 0, 1 \}$ and where the zero-mean random variable $\nu_{t}$ is identically distributed and independent of $\cF_{t-1}^{\, *}$. Provided that $\sigma_4 < +\infty$ and $\tau_{k,\, 8} < +\infty$ (included in $\Theta_4$), the ergodicity arguments \eqref{Ergo} together with the fact that $\Theta \subset \Theta_4$ show that there exists a matrix $L$ satisfying $\langle M \rangle_{n}/n \rightarrow L$ a.s. From the causal representation of the process given in Proposition \ref{PropErgo} and the hypothesis on $\Phi_0$, the increments of $m_{n}$ are also strictly stationary and ergodic. Thus, for any $x > 0$,
$$
\frac{1}{n} \sum_{t=1}^{n} \dE[ (\Delta m_{t})^{\, 2}\, \ind_{ \{ \vert \Delta m_{t} \vert\, \geq\, x \}}\, \vert\, \cF_{t-1}^{\, *}] \cvgas \dE[ (\Delta m_1)^{\, 2}\, \ind_{ \{ \vert \Delta m_1 \vert\, \geq\, x \}}].
$$
Since $\dE[(\Delta m_1)^{\, 2}] < +\infty$, the right-hand side can be made arbitrarily small for $x \rightarrow+\infty$. Once again generalizing to $M_{n}$, we obtain \textit{via} the same arguments that the Lindeberg's condition is satisfied. We are now ready to apply the central limit theorem for vector martingales, given e.g. by Cor. 2.1.10 of \cite{Duflo97}, and get that $M_{n}/\sqrt{n}$ is asymptotically normal with mean 0 and covariance $L$. Together with Remark \ref{RemErgo}, that leads to
\begin{equation*}
\frac{1}{\sqrt{n}}\, \bigg( \sum_{t=1}^{n} \Phi_{t-1}\, X_{t} - S_{n-1}\, \theta^{\, *} \bigg) \cvgl \cN(0,\, L).
\end{equation*}
Finally, the a.s. convergence of $n\, S_{n-1}^{\, -1}$ to $\Lambda_0^{\, -1}$ and Slutsky's lemma conclude the proof. \qed

\subsection{Proof of Theorem \ref{ThmRatEst}}
\label{SecProRat}

Let us come back to the $\cF_{n}^{\, *}$-martingale $m_{n}$ given in \eqref{GenMart}. By the Hartman-Wintner law of the iterated logarithm for martingales (see \textit{e.g.} \cite{Stout70}),
\begin{equation*}
\limsup_{n\, \rightarrow\, +\infty} ~ \frac{m_{n}}{\sqrt{2\, n\, \ln \ln n}} = v_{m} \hsp \text{and} \hsp \liminf_{n\, \rightarrow\, +\infty} ~ \frac{m_{n}}{\sqrt{2\, n\, \ln \ln n}} = -v_{m} \hsp \textnormal{a.s.}
\end{equation*}
where $v_{m} = \dE[(\Delta m_1)^{\, 2}] < +\infty$, provided that $\Theta \subset \Theta_4$. The limit inferior was obtained by replacing $m_{n}$ by $-m_{n}$, which share the same variance and martingale properties. Exploiting the latter bounds and generalizing to $M_{n}$, we can deduce that
\begin{equation}
\label{RatMart}
\limsup_{n\, \rightarrow\, +\infty} ~ \frac{\Vert M_{n} \Vert}{\sqrt{2\, n\, \ln \ln n}} < +\infty \hsp \textnormal{a.s.}
\end{equation}
Because \eqref{DecompTlc} implies $\Vert \wht - \theta^{\, *} \Vert \leq \Vert S_{n-1}^{\, -1} \Vert\, \Vert M_{n} + R_{n} \Vert$, it is now easy to conclude the proof \textit{via} \eqref{RatMart}. Indeed, we recall that $n\, S_{n-1}^{\, -1}$ is convergent and that $\Vert R_{n} \Vert = o(\sqrt{n})$, obviously implying that it is also $o(\sqrt{n\, \ln \ln n})$ a.s. \qed

\appendix
\renewcommand{\thesection}{\Alph{section}}
\section{Some martingales decompositions}
\label{SecAppDecomp}

For an easier reading, let us gather in this Appendix the proofs only based on calculations. Like in Section \ref{SecPro}, $M_{n}$, $\delta_{n}$ and $R_{n}$ will be generic terms, not necessarily identical from one line to another, designating $\cF_{n}^{\, *}$-martingales, differences of $\cF_{n}^{\, *}$-martingales and isolated terms, respectively, where $\cF_{n}^{\, *}$ is the filtration defined in \eqref{FiltMart}. To save place, we deliberately skip the proofs for most of them because they only consist of tedious but straightforward calculations. We assume in all the Appendix that $\Theta \subset \Theta_4 \backslash \Theta^{*}$.

\begin{lem}
\label{LemXtsqN1t}
We have the decomposition
\begin{eqnarray*}
\sum_{t=1}^{n} X_{t}^{\, 2}\, \eta_{1,\, t} & = & \frac{2\, \tau_{1,\, 2}}{1 - 2\, \beta_1} \Bigg[ \sum_{k=1}^{p} \theta_k \sum_{t=1}^{n} X_{t}\, X_{t-k+1} + \beta_2 \sum_{t=1}^{n} X_{t}\, X_{t-1} \Bigg] + M_{n} + R_{n}
\end{eqnarray*}
where $\beta_1 = \alpha_1\, \tau_{1,\, 2}$ and $\beta_2 = \alpha_2\, \tau_{2,\, 2} + \hdots + \alpha_p\, \tau_{p,\, 2}$.
\end{lem}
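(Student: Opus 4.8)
The plan is to expand $X_t^{\, 2}\, \eta_{1,\, t}$ by substituting the scalar recursion \eqref{RCAR} into the factors of $X_t$, and then to sort the resulting summands into three classes: genuine $\cF_n^{\, *}$-martingale increments (absorbed into $M_n$), a handful of \emph{main} terms proportional to the sample covariances that appear on the right-hand side, and negligible boundary/second-order isolated terms (absorbed into $R_n$). To organise this I would write $X_t = P_t + Q_t$, where $P_t = \sum_{k=1}^{p}(\theta_k + \alpha_k\, \eta_{k,\, t-1})\, X_{t-k}$ is $\cF_{t-1}^{\, *}$-measurable and $Q_t = \sum_{k=1}^{p}\eta_{k,\, t}\, X_{t-k} + \veps_t$ is the innovation, so that
$$
X_t^{\, 2}\, \eta_{1,\, t} = P_t^{\, 2}\, \eta_{1,\, t} + 2\, P_t\, Q_t\, \eta_{1,\, t} + Q_t^{\, 2}\, \eta_{1,\, t}.
$$

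First I would dispatch the easy pieces. Since $\eta_{1,\, t}$ is centred and independent of $\cF_{t-1}^{\, *}$, the term $P_t^{\, 2}\, \eta_{1,\, t}$ is an $\cF_n^{\, *}$-martingale increment. Likewise, using that the $(\eta_{k,\, t})$ are mutually independent with vanishing odd moments ($\tau_{1,\, 3} = 0$) and independent of $\veps_t$, every summand of $Q_t^{\, 2}\, \eta_{1,\, t}$ has zero conditional mean given $\cF_{t-1}^{\, *}$, so $\sum_t Q_t^{\, 2}\, \eta_{1,\, t}$ contributes only to $M_n$. The whole content therefore sits in the cross term: conditioning on $\cF_{t-1}^{\, *}$ and using $\dE[\eta_{1,\, t}\, \eta_{k,\, t}] = \tau_{1,\, 2}\, \ind_{\{ k=1 \}}$ together with $\dE[\eta_{1,\, t}\, \veps_t] = 0$ yields $\dE[Q_t\, \eta_{1,\, t}\mid\cF_{t-1}^{\, *}] = \tau_{1,\, 2}\, X_{t-1}$, so that $2\, P_t\, Q_t\, \eta_{1,\, t} = 2\, \tau_{1,\, 2}\, P_t\, X_{t-1} + \delta_t$ with $\delta_t$ a martingale increment.

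The crux is then the explicit drift $2\, \tau_{1,\, 2}\, P_t\, X_{t-1} = 2\, \tau_{1,\, 2}\sum_k \theta_k\, X_{t-k}\, X_{t-1} + 2\, \tau_{1,\, 2}\sum_k \alpha_k\, \eta_{k,\, t-1}\, X_{t-k}\, X_{t-1}$. The $\theta$-part directly gives, after the index shift $\sum_t X_{t-k}\, X_{t-1} = \sum_t X_t\, X_{t-k+1} + R_n$, the first bracketed term of the statement. The $\alpha$-part is where care is needed and splits into two mechanisms. For $k=1$ the summand is $2\, \beta_1\, \eta_{1,\, t-1}\, X_{t-1}^{\, 2}$, which after a one-step shift is exactly $2\, \beta_1 \sum_t X_t^{\, 2}\, \eta_{1,\, t}$, i.e. a copy of the left-hand side; transposing it produces the prefactor $(1-2\, \beta_1)^{-1}$ (this is precisely where $2\, \beta_1 \neq 1$ must enter $\Theta^{*}$). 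For $k \geq 2$, each $\eta_{k,\, t-1}\, X_{t-k}\, X_{t-1}$ is $\cF_{t-1}^{\, *}$-measurable but not centred: a second decomposition, replacing $X_{t-1}$ by its innovation and conditioning on $\cF_{t-2}^{\, *}$, gives $\dE[\eta_{k,\, t-1}\, X_{t-1}\mid\cF_{t-2}^{\, *}] = \tau_{k,\, 2}\, X_{t-1-k}$, hence $\eta_{k,\, t-1}\, X_{t-k}\, X_{t-1} = \tau_{k,\, 2}\, X_{t-k}\, X_{t-1-k} + \delta_t$; summing and shifting turns this into $\beta_2 \sum_t X_t\, X_{t-1}$ and reconstitutes the second bracketed term.

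The main obstacle is precisely this second-level decomposition of the $k \geq 2$ terms: unlike the other pieces they are predictable with a nonzero ergodic mean, so one must peel off the relevant innovation to isolate the covariance contribution, and the martingale increments that remain are adapted to the shifted filtration $(\cF_{t-2}^{\, *})$ rather than $(\cF_{t-1}^{\, *})$ — a point absorbed by the generic $M_n/\delta_n$ notation but worth tracking, as it is what justifies bundling the discarded boundary terms from every index shift and all second-order isolated terms into $R_n$ (each of the form $Y_t$ of Remark \ref{RemErgo}, hence negligible). Collecting the martingale increments into $M_n$, the remainders into $R_n$, and dividing through by $1-2\, \beta_1$ then yields the claimed identity.
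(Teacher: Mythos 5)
Your proposal is correct and follows essentially the same route as the paper: the per-$t$ identity you obtain after the $P_t+Q_t$ split and the two levels of conditioning, namely $X_{t}^{\,2}\,\eta_{1,\,t} = 2\,\tau_{1,\,2}\,(\sum_{k}\theta_k\,X_{t-1}\,X_{t-k} + \sum_{k\geq 2}\alpha_k\,\tau_{k,\,2}\,X_{t-k}\,X_{t-k-1} + \alpha_1\,X_{t-1}^{\,2}\,\eta_{1,\,t-1}) + \delta_t$, is exactly the one displayed in the paper's proof, and the transposition of the self-referencing $2\,\beta_1$ term yielding the prefactor $(1-2\,\beta_1)^{-1}$ is also the paper's argument. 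Your write-up merely makes explicit the "lot of simplifications" the paper leaves implicit, including the shifted-filtration point for the $k\geq 2$ increments.
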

\begin{proof}
Develop $X_{t}$ using \eqref{RCAR} to get that, for all $t$ and after a lot of simplifications,
\begin{equation*}
X_{t}^{\, 2}\, \eta_{1,\, t} = 2\, \tau_{1,\, 2}\, \bigg( \sum_{k=1}^{p} \theta_k\, X_{t-1}\, X_{t-k} + \sum_{k=2}^{p} \alpha_k\, \tau_{k,\, 2}\, X_{t-k}\, X_{t-k-1} + \alpha_1\, X_{t-1}^{\, 2}\, \eta_{1,\, t-1} \bigg) + \delta_{t}.
\end{equation*}
It remains to sum over $t$ and to gather all equivalent terms. Note that $\{ 2\, \beta_1 = 1 \} \subset \Theta^{*}$.
\end{proof}

\begin{lem}
\label{LemXtXti}
We have the decomposition
\begin{eqnarray*}
\sum_{t=1}^{n} X_{t}\, X_{t-1} & = & \frac{1}{1 - 2\, \beta_1} \Bigg[ \sum_{k=1}^{p} \theta_k \sum_{t=1}^{n} X_{t}\, X_{t-k+1} + \beta_2 \sum_{t=1}^{n} X_{t}\, X_{t-1} \Bigg] + M_{n} + R_{n}
\end{eqnarray*}
where $\beta_1 = \alpha_1\, \tau_{1,\, 2}$ and $\beta_2 = \alpha_2\, \tau_{2,\, 2} + \hdots + \alpha_p\, \tau_{p,\, 2}$. In addition, for $i \geq 2$, we have
\begin{eqnarray*}
\sum_{t=1}^{n} X_{t}\, X_{t-i} & = & \sum_{k=1}^{p} \theta_k \sum_{t=1}^{n} X_{t}\, X_{t-\vert i - k \vert} + \beta_1 \sum_{t=1}^{n} X_{t}\, X_{t-\vert i - 2 \vert} + M_{n} + R_{n}.
\end{eqnarray*}
\end{lem}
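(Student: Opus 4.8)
The plan is to develop $X_{t}$ inside the product $X_{t}\, X_{t-i}$ by means of \eqref{RCAR}, and then to sort the resulting terms into three categories: genuine $\cF_{t}^{\, *}$-martingale increments (absorbed into $M_{n}$), shifted empirical covariances, and isolated terms (absorbed into $R_{n}$). Writing
\[
X_{t}\, X_{t-i} = \sum_{k=1}^{p} \theta_k\, X_{t-k}\, X_{t-i} + \sum_{k=1}^{p} \alpha_k\, \eta_{k,\, t-1}\, X_{t-k}\, X_{t-i} + \sum_{k=1}^{p} \eta_{k,\, t}\, X_{t-k}\, X_{t-i} + \veps_{t}\, X_{t-i},
\]
the last two sums are immediately $\cF_{t}^{\, *}$-martingale increments, because $\veps_{t}$ and $\eta_{k,\, t}$ are centered and independent of $\cF_{t-1}^{\, *}$ while $X_{t-k}$ and $X_{t-i}$ are $\cF_{t-1}^{\, *}$-measurable. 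The first sum only requires a shift of the summation index: since $X_{t-k}\, X_{t-i}$ has lag $\vert i-k\vert$, stationarity gives $\sum_{t} X_{t-k}\, X_{t-i} = \sum_{t} X_{t}\, X_{t-\vert i-k \vert}$ up to boundary terms that are isolated, hence in $R_{n}$. This already produces $\sum_{k} \theta_k \sum_{t} X_{t}\, X_{t-\vert i-k \vert}$, which for $i=1$ reads $\sum_{k} \theta_k \sum_{t} X_{t}\, X_{t-k+1}$.

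The delicate part is the middle sum, whose increments are $\cF_{t-1}^{\, *}$-measurable and hence not martingale increments on their own; the key step is to extract their conditional drift given $\cF_{t-2}^{\, *}$. Expanding $X_{t-1}$ through \eqref{RCAR} and using that the $(\eta_{k,\, t})$ are centered, mutually uncorrelated and have vanishing odd moments, one obtains
\[
\dE[\eta_{k,\, t-1}\, X_{t-1}\, \vert\, \cF_{t-2}^{\, *}] = \tau_{k,\, 2}\, X_{t-1-k},
\]
the cubic contributions from the innovation part of $X_{t-1}$ dropping out precisely because $\tau_{k,\, 3} = 0$. From here the two regimes split. When $i \geq 2$, the factor $X_{t-i}$ is $\cF_{t-2}^{\, *}$-measurable, so only $k=1$ survives: its conditional drift is $\alpha_1\, \tau_{1,\, 2}\, X_{t-2}\, X_{t-i} = \beta_1\, X_{t-2}\, X_{t-i}$, while every $k \geq 2$ term is already a martingale increment. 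After the same index shift, this drift sums to $\beta_1 \sum_{t} X_{t}\, X_{t-\vert i-2 \vert}$, which yields the second identity.

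For $i=1$ the mechanism is richer because the $k=1$ term is $\alpha_1\, \eta_{1,\, t-1}\, X_{t-1}^{\, 2}$, in which the external $\eta_{1,\, t-1}$ resonates with the copy of $\eta_{1,\, t-1}$ carried by $X_{t-1}$. Rather than computing its drift directly, I would reindex $\sum_{t} \eta_{1,\, t-1}\, X_{t-1}^{\, 2} = \sum_{t} X_{t}^{\, 2}\, \eta_{1,\, t}$ (up to isolated terms) and substitute Lemma \ref{LemXtsqN1t}; since $\alpha_1\cdot 2\, \tau_{1,\, 2} = 2\, \beta_1$, this injects a factor $2\, \beta_1/(1-2\, \beta_1)$ in front of $\sum_{k} \theta_k \sum_{t} X_{t}\, X_{t-k+1} + \beta_2 \sum_{t} X_{t}\, X_{t-1}$. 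Meanwhile the terms $k \geq 2$ now have drift $\alpha_k\, \tau_{k,\, 2}\, X_{t-k}\, X_{t-k-1}$, which after shifting sum to $\beta_2 \sum_{t} X_{t}\, X_{t-1}$. Writing $S = \sum_{k} \theta_k \sum_{t} X_{t}\, X_{t-k+1}$ and $T = \sum_{t} X_{t}\, X_{t-1}$, collecting everything leads to the self-referential identity $T = S + \tfrac{2\, \beta_1}{1-2\, \beta_1}(S + \beta_2\, T) + \beta_2\, T + M_{n} + R_{n}$, and the simplification $1 + 2\, \beta_1/(1-2\, \beta_1) = 1/(1-2\, \beta_1)$ produces exactly $T = (1-2\, \beta_1)^{-1}(S + \beta_2\, T) + M_{n} + R_{n}$. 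This is where the exclusion $\{ 2\, \beta_1 = 1 \} \subset \Theta^{*}$ is used.

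The main obstacle is the bookkeeping around the $i=1$ case: one must recognize that the self-interaction term reproduces $\sum_{t} X_{t}^{\, 2}\, \eta_{1,\, t}$ so that Lemma \ref{LemXtsqN1t} can be fed back in, and then verify that the various centered remainders — in particular $\eta_{k,\, t-1}\, X_{t-k}\, X_{t-i} - \tau_{k,\, 2}\, X_{t-1-k}\, X_{t-i}$, which is centered given $\cF_{t-2}^{\, *}$ rather than $\cF_{t-1}^{\, *}$ — do assemble, after reindexing, into an honest $\cF_{n}^{\, *}$-martingale of the generic form \eqref{GenMart} with only $o(\sqrt{n})$ isolated corrections. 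The vanishing of the odd moments is the ingredient that keeps every drift computation clean throughout.
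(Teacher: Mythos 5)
Your proposal is correct and follows essentially the same route as the paper: develop $X_{t}$ via \eqref{RCAR}, identify the conditional drifts $\beta_1\, X_{t-2}\, X_{t-i}$ (for $i \geq 2$) and $\alpha_1\, \eta_{1,\, t-1}\, X_{t-1}^{\, 2} + \sum_{k \geq 2} \alpha_k\, \tau_{k,\, 2}\, X_{t-k}\, X_{t-k-1}$ (for $i=1$), sum over $t$, and feed Lemma \ref{LemXtsqN1t} back in to close the $i=1$ identity. Your algebraic assembly $1 + 2\beta_1/(1-2\beta_1) = 1/(1-2\beta_1)$ and the treatment of the $\cF_{t-2}^{\, *}$-centered increments match the paper's (much terser) argument.
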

\begin{proof}
Develop $X_{t}$ using \eqref{RCAR} to get, after additional calculations, that for all $t$,
\begin{equation*}
X_{t}\, X_{t-i} = \sum_{k=1}^{p} \theta_k\, X_{t-i}\, X_{t-k} + \alpha_1\, \tau_{1,\, 2}\, X_{t-2}\, X_{t-i} + \delta_{t}
\end{equation*}
as soon as $i \geq 2$. For $i=1$, we can show that for all $t$,
\begin{equation*}
X_{t}\, X_{t-1} = \sum_{k=1}^{p} \theta_k\, X_{t-1}\, X_{t-k} + \alpha_1\, X_{t-1}^{\, 2}\, \eta_{1,\, t-1} + \sum_{k=2}^{p} \alpha_k\, \tau_{k,\, 2}\, X_{t-k}\, X_{t-k-1} + \delta_{t}.
\end{equation*}
Finally, it remains to sum over $t$ and to exploit Lemma \ref{LemXtsqN1t}. Note that $\{ 2\, \beta_1 = 1 \} \subset \Theta^{*}$.
\end{proof}

\begin{lem}
\label{LemXtsqN1tsq}
There exists $\gamma_0, \hdots, \gamma_{p-1}$ such that we have the decomposition
\begin{eqnarray*}
\sum_{t=1}^{n} X_{t}^{\, 2}\, \eta_{1,\, t}^{\, 2} & = & \frac{1}{1 - \alpha_1\, \beta_1}\, \Bigg[ \sum_{k=1}^{p} \gamma_{k-1} \sum_{t=1}^{n} X_{t}\, X_{t-k+1} + \sigma_2\, \tau_{1,\, 2}\, n \Bigg] + M_{n} + R_{n}
\end{eqnarray*}
where $\beta_1 = \alpha_1\, \tau_{1,\, 2}$.
\end{lem}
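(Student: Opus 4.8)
The plan is to follow the same develop-and-condition scheme as in Lemmas \ref{LemXtsqN1t} and \ref{LemXtXti}: substitute the autoregression \eqref{RCAR} into $X_{t}^{\, 2}\, \eta_{1,\, t}^{\, 2}$, expand, and split off the $\cF_{t-1}^{\, *}$-predictable part from a martingale increment $\delta_{t}$. Writing $c_{k,\, t} = \theta_k + \alpha_k\, \eta_{k,\, t-1} + \eta_{k,\, t}$, I would expand $X_{t}^{\, 2} = (\sum_k c_{k,\, t}\, X_{t-k} + \veps_{t})^2$ and multiply by $\eta_{1,\, t}^{\, 2}$. Taking $\dE[\,\cdot\,\vert\,\cF_{t-1}^{\, *}]$ and using that $\eta_{1,\, t}$ is fresh and independent of the other coordinates, that all odd moments vanish ($\sigma_{2q+1} = \tau_{k,\, 2q+1} = 0$), and that $\dE[\eta_{1,\, t}^{\, 2}] = \tau_{1,\, 2}$, $\dE[\eta_{1,\, t}^{\, 4}] = \tau_{1,\, 4}$, $\dE[\veps_{t}^{\, 2}] = \sigma_2$, collapses most of the products and leaves a predictable sum plus $\delta_{t}$.

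The decisive term comes from the diagonal $k=1$ contribution $c_{1,\, t}^{\, 2}\, X_{t-1}^{\, 2}$: the piece $(\alpha_1\, \eta_{1,\, t-1})^2$ multiplied by $\dE[\eta_{1,\, t}^{\, 2}] = \tau_{1,\, 2}$ produces $\alpha_1^{\, 2}\, \tau_{1,\, 2}\, X_{t-1}^{\, 2}\, \eta_{1,\, t-1}^{\, 2} = \alpha_1\, \beta_1\, X_{t-1}^{\, 2}\, \eta_{1,\, t-1}^{\, 2}$, that is, the summand of the left-hand side shifted by one lag. After summing over $t$ and reindexing (the boundary contributions being isolated terms absorbed in $R_{n}$), this reproduces $\alpha_1\, \beta_1 \sum_{t} X_{t}^{\, 2}\, \eta_{1,\, t}^{\, 2}$; moving it to the left gives the factor $(1 - \alpha_1\, \beta_1)$, which is legitimately inverted since $\{ \alpha_1\, \beta_1 = 1 \} \subset \Theta^{*}$. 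The constant is equally transparent: only $\veps_{t}^{\, 2}\, \eta_{1,\, t}^{\, 2}$ has nonzero mean $\sigma_2\, \tau_{1,\, 2}$ and contributes $\sigma_2\, \tau_{1,\, 2}\, n$, while the $\veps_{t}$ cross terms are centred and join $M_{n}$.

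It then remains to bring every other predictable piece into the canonical set $\{ \sum_{t} X_{t}\, X_{t-k+1} \}_{k=1}^{p}$. The products $\theta_k \theta_\ell X_{t-k} X_{t-\ell}$ and the variance pieces $\tau_{1,\, 2}\, \tau_{k,\, 2}\, X_{t-k}^{\, 2}$ already appear, after reindexing, as autocovariances at lags $0, \hdots, p-1$; the terms weighted by a single $\eta_{k,\, t-1}$ with $k \geq 2$ are martingale increments with respect to $\cF_{t-1}^{\, *}$ (since the accompanying $X$'s are then $\cF_{t-2}^{\, *}$-measurable) and are swept into $M_{n}$; finally the genuinely awkward term $\eta_{1,\, t-1}\, X_{t-1}^{\, 2}$, for which $X_{t-1}$ and $\eta_{1,\, t-1}$ are correlated, is handled by invoking Lemma \ref{LemXtsqN1t}, and any residual higher-lag products by Lemma \ref{LemXtXti}, both of which already express such sums in canonical form modulo $M_{n} + R_{n}$. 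Collecting the coefficients of $\sum_{t} X_{t}\, X_{t-k+1}$ defines $\gamma_0, \hdots, \gamma_{p-1}$. The main obstacle is purely the bookkeeping: correctly classifying each of the many cross terms as predictable-canonical, martingale increment, or isolated remainder, and making sure every $O(n)$ non-canonical contribution (in particular the $\eta$-weighted cross products) is reduced through the earlier lemmas rather than left standing. The self-referential closure and the odd-moment cancellations are the only conceptual points, everything else being routine algebra.
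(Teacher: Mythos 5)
Your proposal follows essentially the same route as the paper: develop $X_t$ via \eqref{RCAR} inside $X_t^{\,2}\eta_{1,t}^{\,2}$, condition to split off martingale increments, identify the self-referential term $\alpha_1\beta_1\, X_{t-1}^{\,2}\eta_{1,t-1}^{\,2}$ that produces the factor $(1-\alpha_1\beta_1)^{-1}$ and the constant $\sigma_2\tau_{1,2}\, n$ from $\veps_t^{\,2}\eta_{1,t}^{\,2}$, and reduce the remaining predictable pieces to the canonical sums $\sum_t X_t X_{t-k+1}$ via Lemmas \ref{LemXtsqN1t} and \ref{LemXtXti}, which is exactly the paper's (largely unexpanded) argument. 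One small bookkeeping slip: a term carrying a single $\eta_{k,t-1}$ with $k\geq 2$ is \emph{not} always a martingale increment, since the accompanying $X$'s may include $X_{t-1}$, which depends on $\eta_{k,t-1}$; such a term leaves a predictable residue $\tau_{k,2}\, X_{t-k}X_{t-k-1}$ (the same mechanism as in Lemma \ref{LemXtXti} for $i=1$), but this residue is itself canonical, so it only alters the values of the $\gamma$'s and not the validity of the stated decomposition.
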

\begin{proof}
We first obtain, using \eqref{RCAR}, that for all $t$,
\begin{equation*}
X_{t}^{\, 2}\, \eta_{1,\, t}^{\, 2} = X_{t}\, \eta_{1,\, t}^{\, 2}\, \Bigg( \sum_{k=1}^{p} \theta_k\, X_{t-k} + \sum_{k=1}^{p} X_{t-k}\, \eta_{k,\, t} + \sum_{k=1}^{p} \alpha_k\, X_{t-k}\, \eta_{k,\, t-1} \Bigg)   + \eta_{1,\, t}^{\, 2}\, \veps_{t}^{\, 2} + \delta_{t}.
\end{equation*}
Then, the statements above lead to the result. Note that $\{ \alpha_1\, \beta_1 = 1 \} \subset \Theta^{*}$.
\end{proof}

\begin{lem}
\label{LemXtsq}
There exists $U^{\, T} = (u_1, \hdots, u_{p-1}, 0)$ such that we have the decomposition
\begin{eqnarray*}
\sum_{t=1}^{n} X_{t}^{\, 2} & = & \frac{U^{\, T}}{1-s_0}\, \sum_{t=1}^{n} \Phi_{t-1}\, X_{t} + \bigg( \ell_0 - \frac{U^{\, T}\, L_1}{1-s_0} \bigg)\, n + M_{n} + R_{n}
\end{eqnarray*}
where
\begin{equation*}
s_0 = \bigg( 1 + \alpha_1\, \theta_2 + \frac{2\, \alpha_1\, \theta_1^{\, 2}}{1 - 2\, \beta_1} + \frac{\sigma_2}{1 - \alpha_1\, \beta_1} \bigg)\, \tau_{1,\, 2} + \sum_{k=2}^{p} (1 + \alpha_k^{\, 2})\, \tau_{k,\, 2}
\end{equation*}
and $\beta_1 = \alpha_1\, \tau_{1,\, 2}$.
\end{lem}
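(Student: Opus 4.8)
The plan is to follow the pattern of the preceding lemmas: expand one factor of $\sum_{t=1}^{n} X_t^{\, 2}$ through the autoregression \eqref{RCAR}, split off the part that is $\cF_{t-1}^{\, *}$-measurable, and absorb the rest into an $\cF_n^{\, *}$-martingale. Multiplying $X_t = \sum_{k=1}^p (\theta_k + \alpha_k \eta_{k,\, t-1} + \eta_{k,\, t}) X_{t-k} + \veps_t$ by $X_t$ gives
\[
X_t^{\, 2} = \sum_{k=1}^p \theta_k\, X_t\, X_{t-k} + \sum_{k=1}^p \alpha_k\, \eta_{k,\, t-1}\, X_t\, X_{t-k} + \sum_{k=1}^p \eta_{k,\, t}\, X_t\, X_{t-k} + \veps_t\, X_t .
\]
Conditioning on $\cF_{t-1}^{\, *}$ replaces $X_t$ by $\sum_j (\theta_j + \alpha_j \eta_{j,\, t-1}) X_{t-j}$, so that, up to martingale increments, the third sum reduces to $\sum_k \tau_{k,\, 2}\, X_{t-k}^{\, 2}$ and the last one to $\sigma_2$. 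Summed over $t$, the first sum is exactly $\theta^{\, T} \sum_{t=1}^{n} \Phi_{t-1} X_t$, while $\sum_k \tau_{k,\, 2} \sum_t X_{t-k}^{\, 2}$ equals $\big( \sum_k \tau_{k,\, 2} \big) \sum_{t=1}^{n} X_t^{\, 2}$ up to boundary terms which, being second-order isolated terms, are $o(\sqrt n)$ under $\Theta_4$ by Remark \ref{RemErgo}. These already account for the $\sum_k \tau_{k,\, 2}$ part of $s_0$ and for the unreduced $\theta$-part of $U$.

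The delicate contribution is the middle sum $\sum_k \alpha_k \eta_{k,\, t-1} X_t X_{t-k}$, whose conditional mean is $\sum_k \alpha_k \eta_{k,\, t-1} X_{t-k} \big( \sum_j (\theta_j + \alpha_j \eta_{j,\, t-1}) X_{t-j} \big)$. I would sort the resulting products according to whether the lagged factors are $\cF_{t-2}^{\, *}$-measurable. A product carrying the fresh noise $\eta_{k,\, t-1}$ whose remaining factors all live in $\cF_{t-2}^{\, *}$ is, after re-indexing, a genuine martingale increment; in particular the diagonal squares $\alpha_k^2 \eta_{k,\, t-1}^{\, 2} X_{t-k}^{\, 2}$ with $k \geq 2$ split, via $\eta_{k,\, t-1}^{\, 2} = \tau_{k,\, 2} + (\eta_{k,\, t-1}^{\, 2} - \tau_{k,\, 2})$, into the contribution $\alpha_k^2 \tau_{k,\, 2} \sum_t X_{t-k}^{\, 2}$ — completing the $(1 + \alpha_k^2)\tau_{k,\, 2}$ entries of $s_0$ — plus a centred martingale increment. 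The products not of this type are exactly those coupling a noise with $X_{t-1}$; among them the diagonal couplings of $\eta_{1,\, t-1}$ with $X_{t-1}$ amount, up to boundary terms, to $\sum_t X_t^{\, 2} \eta_{1,\, t}$ and $\sum_t X_t^{\, 2} \eta_{1,\, t}^{\, 2}$, for which I would substitute the decompositions of Lemmas \ref{LemXtsqN1t} and \ref{LemXtsqN1tsq}, while the off-diagonal ones reduce to lower-lag autocovariance sums. This is precisely the mechanism that injects $\alpha_1 \theta_2$, $2 \alpha_1 \theta_1^{\, 2}/(1 - 2 \beta_1)$ and $\sigma_2/(1 - \alpha_1 \beta_1)$, each multiplied by $\tau_{1,\, 2}$, into $s_0$, together with the lower-lag terms feeding $U$.

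Collecting everything yields a scalar identity of the shape $\sum_{t=1}^{n} X_t^{\, 2} = s_0 \sum_{t=1}^{n} X_t^{\, 2} + U^{\, T} \sum_{t=1}^{n} \Phi_{t-1} X_t + c\, n + M_n + R_n$, where the lag-$p$ autocovariance $\sum_t X_t X_{t-p}$ coming from the $\theta_p$ term is re-expressed through the relations of Lemma \ref{LemXtXti}, which is what forces the last coordinate of $U$ to vanish. Since $\{ s_0 = 1 \} \subset \Theta^{*}$, on $\Theta_4 \backslash \Theta^{*}$ I may divide by $1 - s_0$ to reach the stated decomposition. The constant coefficient need not be tracked explicitly: dividing by $n$ and letting $n \to \infty$, ergodicity \eqref{SampleCov} forces $\ell_0 = U^{\, T} L_1/(1 - s_0) + c/(1-s_0)$, so the coefficient of $n$ equals $\ell_0 - U^{\, T} L_1/(1 - s_0)$, as claimed.

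The main obstacle is not conceptual but the coefficient bookkeeping producing $s_0$ and $U$: one has to push every term through the substitutions of Lemmas \ref{LemXtsqN1t}, \ref{LemXtsqN1tsq} and \ref{LemXtXti} and then isolate the coefficient of $\sum_t X_t^{\, 2}$. The two recurring pitfalls are, first, checking that each centred or cross-noise remainder is a martingale increment at the right time index — a term in $\eta_{k,\, t-1}$ is a difference only when its companion factors are $\cF_{t-2}^{\, *}$-measurable, which is exactly what fails for the $\eta_{1,\, t-1}$–$X_{t-1}$ couplings rerouted through the two scalar lemmas — and second, confirming that every leftover is a second-order isolated term, so that $\vert R_n \vert = o(\sqrt n)$ genuinely holds once $\Theta \subset \Theta_4$.
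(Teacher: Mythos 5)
Your proposal follows the same route as the paper's (very terse) proof: expand $X_t^{\,2}$ through \eqref{RCAR}, isolate the $\cF_{t-1}^{\,*}$-conditional means from the martingale increments, and close the resulting linear relation in $\sum_t X_t^{\,2}$ by substituting the decompositions of Lemmas \ref{LemXtsqN1t}, \ref{LemXtsqN1tsq} and \ref{LemXtXti} before dividing by $1-s_0$ on $\Theta_4 \backslash \Theta^{*}$. Your bookkeeping of where each piece of $s_0$ arises, why the last coordinate of $U$ vanishes, and how ergodicity fixes the coefficient of $n$ is consistent with the stated lemma; the only imprecision is verbal — the conditional mean $\tau_{k,\,2}\,X_{t-k}^{\,2}$ of $\eta_{k,\,t}\,X_t\,X_{t-k}$ comes from pairing the fresh noises inside $X_t$ with $\eta_{k,\,t}$, not from literally replacing $X_t$ by its $\cF_{t-1}^{\,*}$-conditional expectation, but you draw the correct conclusion.
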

\begin{proof}
As in the previous proofs, we start by developping $X_{t}$ using \eqref{RCAR} to get, for all $t$,
\begin{equation*}
X_{t}^{\, 2} = X_{t}\, \Bigg( \sum_{k=1}^{p} \theta_k\, X_{t-k} + \sum_{k=1}^{p} X_{t-k}\, \eta_{k,\, t} + \sum_{k=1}^{p} \alpha_k\, X_{t-k}\, \eta_{k,\, t-1} \Bigg) + \veps_{t}^{\, 2} + \delta_{t}.
\end{equation*}
Finally, after some additional steps of calculations, Lemmas \ref{LemXtsqN1t} and \ref{LemXtsqN1tsq} enable to simplify the summation. Note that $\{ 2\, \beta_1 = 1 \} \cup \{ \alpha_1\, \beta_1 = 1 \} \cup \{ s_0 = 1 \} \subset \Theta^{*}$.
\end{proof}

\section{Moments of the process}
\label{SecAppMom}

In this section, we repeatedly need the well-known relations $\vec(A X B) = (B^{\, T} \otimes A)\, \vec(X)$ and $(A \otimes B)(C \otimes D) = (AC) \otimes (BD)$. First, let us give more details on the condition $\dE[\eta_{k,\, t}^{\, a}\, X_{t}^{\, 2}] < +\infty$ for $a \in \{ 0, 1, 2 \}$. From the vector form \eqref{RCARVec} of the process together with its $\cF_{t}$-measurability \eqref{ExprCausal}, we may obtain that for all $t$, developing and taking expectation,
\begin{eqnarray*}
\dE[\vec(\Phi_{t}\, \Phi_{t}^{\, T})] & = & (C_{\theta} \otimes C_{\theta} + \Gamma_0)\, \dE[\vec(\Phi_{t-1}\, \Phi_{t-1}^{\, T})] \\
 & & \hsp +~ (I_p \otimes C_{\theta})\, \dE[((N_{t-1}\, D_{\alpha}) \otimes I_p)\, \vec(\Phi_{t-1}\, \Phi_{t-1}^{\, T})] \\
 & & \hsp +~ (C_{\theta} \otimes I_p)\, \dE[(I_p \otimes (N_{t-1}\, D_{\alpha}))\, \vec(\Phi_{t-1}\, \Phi_{t-1}^{\, T})] \\
 & & \hsp +~ \dE[((N_{t-1}\, D_{\alpha}) \otimes (N_{t-1}\, D_{\alpha}))\, \vec(\Phi_{t-1}\, \Phi_{t-1}^{\, T})] + \vec(\Sigma_0)
\end{eqnarray*}
where $\Gamma_0 = \dE[N_0 \otimes N_0]$ and $\Sigma_0 = \dE[E_0\, E_0^{\, T}]$. In a more compact form,
\begin{equation*}
U_{t} = (C_{\theta} \otimes C_{\theta} + \Gamma_0)\, U_{t-1} + (I_p \otimes C_{\theta})\, V_{1,\, t-1} + (C_{\theta} \otimes I_p)\, V_{2,\, t-1} + W_{t-1} + \vec(\Sigma_0)
\end{equation*}
where $U_{t}$, $V_{1,\, t}$, $V_{2,\, t}$ and $W_{t}$ are easily identifiable from the explicit relation above. Working similarly on the other components and stacking them into $\Omega_{t}$, we get the linear system
\begin{equation}
\label{EqLinMom2}
\Omega_{t} = A_2\, \Omega_{t-1} + B_2
\end{equation}
where $B_2^{\, T} = (\vec(\Sigma_0), 0, 0, \Gamma_{\alpha \alpha}\, \vec(\Sigma_0))$,
\begin{equation*}
A_2 = \begin{pmatrix}
C_{\theta} \otimes C_{\theta} + \Gamma_0 & I_p \otimes C_{\theta} & C_{\theta} \otimes I_p & I_{p^2} \\
G_{\alpha} \otimes C_{\theta} + \Gamma_{\alpha c} & \Gamma_{\alpha} & G_{\alpha} \otimes I_p & 0 \\
C_{\theta} \otimes G_{\alpha} + \Gamma_{\alpha c}^{\, \prime} & I_p \otimes G_{\alpha} & \Gamma_{\alpha}^{\, \prime} & 0 \\
\Gamma_{\alpha \alpha}\, (C_{\theta} \otimes C_{\theta}) + \Lambda_{\alpha \alpha} & \Gamma_{\alpha \alpha}\, (I_p \otimes C_{\theta}) & \Gamma_{\alpha \alpha}\, (C_{\theta} \otimes I_p) & \Gamma_{\alpha \alpha}
\end{pmatrix}
\end{equation*}
and where, additionally, $G_{\alpha} = \dE[N_0\, D_{\alpha}\, N_0]$, $\Gamma_{\alpha} = \dE[(N_{0}\, D_{\alpha}) \otimes N_{0}]$, $\Gamma_{\alpha}^{\, \prime} = \dE[N_0 \otimes (N_{0}\, D_{\alpha})]$, $\Gamma_{\alpha c} = \dE[(N_0\, D_{\alpha}\, C_{\theta}) \otimes N_0]$, $\Gamma_{\alpha c}^{\, \prime} = \dE[N_0 \otimes (N_0\, D_{\alpha}\, C_{\theta})]$, $\Gamma_{\alpha \alpha} = \dE[(N_{0}\, D_{\alpha}) \otimes (N_{0}\, D_{\alpha})]$ and $\Lambda_{\alpha \alpha} = \dE[(N_{0}\, D_{\alpha}\, N_{0}) \otimes (N_{0}\, D_{\alpha}\, N_{0})]$. In virtue of \eqref{EqLinMom2}, the condition $\rho(A_2) < 1$ is necessary and sufficient for the existence of the second-order moments of the process, like in \cite{NichollsQuinn81a}. Note that the fourth-order moments of $(\eta_{k,\, t})$ and the second-order moments of $(\veps_{t})$ are involved.

\begin{rem}
\label{RemEstParam}
Let $\Omega = (I_{4p^2}-A_2)^{-1} B_2$ be the steady state of \eqref{EqLinMom2} with vertical blocks $U$, $V_1$, $V_2$ and $W$. Then, by ergodicity,
\begin{equation*}
\frac{1}{n} \sum_{t=1}^{n} \Phi_{t-1}\, \Phi_{t-1}^{\, T} \cvgas \vec^{-1}(U) \hsp \text{and} \hsp \frac{1}{n} \sum_{t=1}^{n} \Phi_{t-1}\, \Phi_{t-1}^{\, T}\, D_{\alpha}\, N_{t-1} \cvgas \vec^{-1}(V_1)
\end{equation*}
where $\vec^{-1} : \dR^{p^2} \rightarrow \dR^{p \times p}$ stands for the inverse operator of $\vec : \dR^{p \times p} \rightarrow \dR^{p^2}$. Thus, we deduce from \eqref{RCARVec} and \eqref{OLS} that $\theta^{\, *} = \theta + \Lambda_0^{-1} \lambda$, where $\Lambda_0 = \vec^{-1}(U)$, given in \eqref{Lam}, and $\lambda$ is the first column of $\vec^{-1}(V_1)$. In particular, $\theta^{\, *} = \theta \Leftrightarrow \lambda=0 \Leftarrow \alpha=0$.
\end{rem}
The condition $\dE[\eta_{k,\, t}^{\, a}\, X_{t}^{\, 4}] < +\infty$ for $a \in \{ 0, \hdots, 4 \}$ can also be treated using the same strategy. However, due to the extent of calculations, we cannot afford it in this paper and we just give an outline. Instead of working on $\dE[\vec(\Phi_{t}\, \Phi_{t}^{\, T})]$ like in the case of the second-order moments, we have to start with the treatment of $\dE[\vec(\vec(\Phi_{t}\, \Phi_{t}^{\, T})\, \vec^{T}(\Phi_{t}\, \Phi_{t}^{\, T}))]$ which shall lead, after an extremely long development, to another system of the form
\begin{equation}
\label{EqLinMom4}
\Pi_{t} = A_4\, \Pi_{t-1} + B_4.
\end{equation}
Here, the fourth-order moments of $(\veps_{t})$ and the eighth-order moments of $(\eta_{k,\, t})$ are involved. Finally, the condition of existence of the fourth-order moments of the process is $\rho(A_4) < 1$, as it is done in \cite{ProiaSoltane18} for $p=1$.

\nocite{*}

\bibliographystyle{apalike} 
\bibliography{RCARMACoefGen}

\vspace{10pt}

\end{document}